\title{Abstract rewriting internalized} 
\author{Maxime LUCAS}{Inria and LS2N, Universit\'e de Nantes, France
\and \url{http://lucas-webpage.gforge.inria.fr/}
}{Maxime.Lucas@inria.fr}{https://orcid.org/0000-0002-9680-7129}{}
\authorrunning{M. Lucas} 
\keywords{Rewriting theory, Internal object, Kleene algebras, Termination} 
\newlength{\myline}
\newcommandx*{\doublearrow}[2]{
  \draw[line width=rule_thickness,double equal sign distance,#1] #2;
}
\newcommandx*{\triplearrow}[4][1=0, 2=1]{
  \draw[line width=\myline,double distance=5\myline,#3] #4;
  \draw[line width=\myline,shorten <=#1\myline,shorten >=#2\myline,#3] #4;
}
\newcommandx*{\quadarrow}[4][1=0, 2=2.5]{
  \draw[line width=\myline,double distance=8\myline,#3] #4;
  \draw[line width=\myline,double distance=2\myline,shorten <=#1\myline,shorten >=#2\myline,#3] #4;
}
\let\orgdescriptionlabel\descriptionlabel
\renewcommand*{\descriptionlabel}[1]{%
  \let\orglabel\label \let\label\@gobble
  \phantomsection \protected@edef\@currentlabel{#1}%
  \let\label\orglabel
  \orgdescriptionlabel{#1}%
} \makeatother
\newcommand\refitem[1]{\textcolor{lipicsGray}{\sffamily\bfseries\mathversion{bold}\ref{#1}}}
\newcommand\source[1]{\sigma_{#1}}
\newcommand\target[1]{\tau_{#1}}
\newcommand\minimum[1]{#1_{min}}
\newcommand\wfto[1]{\xrightarrow[wf]{#1}}
\newcommand\algto[1]{\xrightarrow[alg]{#1}}
\newcommand\Filt[1]{#1\text{-}\operatorname{Filt}}
\newcommand\sym{\mathcal S}
\DeclareMathOperator{\Graph}{Gph}
\DeclareMathOperator{\Ab}{Ab}
\DeclareMathOperator{\colim}{colim}
\DeclareMathOperator{\Set}{Set}
\DeclareMathOperator{\Vect}{Vect}
\DeclareMathOperator{\Gp}{Gp}
\DeclareMathOperator{\supp}{supp}
\DeclareMathOperator{\height}{ht}
\tikzset{
    labl/.style={anchor=south, rotate=45, inner sep=.5mm}
}
\begin{document}

\maketitle

\begin{abstract}
  In traditional rewriting theory, one studies a set of terms up to a set of
  rewriting relations. In algebraic rewriting, one instead studies a vector
  space of terms, up to a vector space of relations. Strikingly, although both
  theories are very similar, most results (such as Newman's Lemma) require
  different proofs in these two settings.

  In this paper, we develop rewriting theory internally to a category
  $\mathcal C$ satisfying some mild properties. In this general setting, we
  define the notions of termination, local confluence and confluence using the
  notion of reduction strategy, and prove an analogue of Newman's Lemma. In the
  case of $\mathcal C= \Set$ or $\mathcal C = \Vect$ we recover classical
  results of abstract and algebraic rewriting in a slightly more general form,
  closer to von Oostrom's notion of decreasing diagrams. 
\end{abstract}


\section{Introduction}

The goal of this work is to bridge the gap between two major branches of
rewriting theory: abstract rewriting, stemming from the work of Newman, and
algebraic rewriting. In algebraic rewriting, convergent presentations are called
Gr\"obner basis and were introduced by Buchberger to compute basis of algebras. In
particular, it allows one to solve the ideal membership problem, which is the
linear equivalent of the word problem. Today Gr\"obner basis are in particular
used in control theory.

The two theories are very similar: both define a notion of terminating,
confluent or locally confluent relation, and prove a diamond Lemma showing that
locally confluent terminating system are confluent (in the case of algebraic
rewriting, this was explicitly done by Bergman~\cite{Bergman78}). Nevertheless,
we are not aware of any treatment unifying those two theories. 

The main difference between algebraic and abstract rewriting is that algebraic
rewriting is not stable by contextual closure. For example, suppose given an
rewriting relation $\to$ on a vector space. If $u \to v$ is a valid rewriting step,
then $u + w \to v + w$ is not necessarily valid. This is necessary to avoid
non-terminating behaviours: otherwise the rewriting step $u - u - v \to v -u -v$
would be valid, but this is just $-v \to -u$, which implies that $v \to u$: in other
words, $\to$ is always symmetric and can never be terminating. The failure to take
this phenomenon into account plagued many early papers studying the algebraic
$\lambda$-calculus, such as \cite{ER_diff_lambda}, and was not recognized before
\cite{V_algebraic_lambda}.

While the existence of two rewriting theories is not an issue \textit{per se},
the multiplication of applications of higher dimensional rewriting to various
algebraic structures calls for a unified framework. Higher dimensional rewriting
seeks to apply rewriting techniques to study homotopical and homological
properties of algebraic objects. Already existing examples include monoids
\cite{SOK94}, algebras \cite{GHM_algebra}, string diagrams (encoded as Pros and
ProPs) \cite{GM_Pro}, and term rewriting systems \cite{MM_term}. Future cases of
interest include e.g. (non-symmetric, symmetric or shuffle, linear or
set-theoretic) operads or linear Pros.

All these structures can be represented as monoids inside a category
$\mathcal C$: taking $\mathcal C = \Set$ we obtain (regular) monoids, monoid
objects in $\mathcal C = \Vect_k$ are $k$-algebras, and the different flavors of
operads are all monoids in various categories of collections. This paper
constitutes a first step towards a unified treatment of higher dimensional
rewriting for these different objects, by developing a general theory of
rewriting inside a category $\mathcal C$, omitting with the monoid structure for
now. In the $\mathcal C = \Set$ and $\mathcal C = \Vect_k$ we recover
respectively abstract and algebraic rewriting, in a slightly more general form
than the one usually presented. Those two cases have already presented in
earlier works, namely \cite{These} for the case $\mathcal C = \Set$, and
\cite{CL_IWC} for the case $\mathcal C = \Vect_k$.

\medskip

There are two ways to model a relation in abstract rewriting. The first one is
to see a relation $\to$ on an object $E$ as a subset of $E \times E$. The second one,
which is the one suited for higher dimensional rewriting and that we will
generalize here, is to see a relation as a set $R$ equipped with maps
$\source R, \target R : R \to E$, associating to any $f \in R$ its source
$\source R f$ and target $\target R f$, and making $(E,R,\source R, \target R)$
a directed graph\footnote{This can also be seen as a \emph{labelled} rewriting
  relation.}. We therefore study the rewriting properties of graphs internal to
an arbitrary category $\mathcal C$ satisfying some mild properties. We now
describe some of the content of the paper.

While the notions of reflexivity, symmetry and transitivity can be defined
straightforwardly in this context, the main obstacle lies in defining an
appropriate notion of termination for a graph $R$ on an object $E$. While in
abstract rewriting a relation $R$ is \textit{intrinsically} terminating, the
situation is more subtle in algebraic rewriting. There, a relation on a vector
space $E = kX$ is said to terminate \emph{with respect to a terminating order on
  $X$}. In practice, $kX$ is often the vector space underlying a polynomial
algebra, and the terminating order is a monomial order.

In an arbitrary category $\mathcal C$, we proceed similarly, by supposing that
$E$ is endowed with a filtration
$E_0 \subset E_1 \subset E_2 \subset \ldots \subset E$, which encodes the terminating ordering on
$E$. Formally, such a filtration is given by a terminating order $(I,\leq)$ and a
functor $I \to \mathcal C$ whose colimit is $E$. From this we can define the
\emph{object of normal forms} of $E$, denoted $\minimum E$, as the union of the $E_i$
where $i$ is a minimal element of $I$.

A graph $R$ on $E$ is then terminating if it is compatible with this filtration
in a suitable way, expressed through the existence of a local strategy
$h : E \to R$. When $E$ is a set, a local strategy maps any $x \in E$ to a rewriting
step $h(x)$ of source $x$ and whose target is smaller than $x$. Any local
strategy induces a (global) strategy $H$, mapping any $x$ to a path $H(x)$ to
(one of) its normal form, denoted $H^\tau(x)$. The end diagram is the following:

 \[
   \begin{tikzcd}[column sep = large]
     R \ar[r, "\source R", yshift=.1cm] \ar[r, ,yshift=-.1cm, "\target R"']
     &
     E \ar[r, "H^\tau"]
     \ar[l,bend right = 50, "H"']
     \ar[r, bend left = 50, hookleftarrow, "\iota"]
     &
     \minimum E
   \end{tikzcd}
 \]
 and it satisfies the relations:
 \[
   \source R \circ H = id_E \qquad
   \target R \circ H = \iota \circ H^{\tau} \qquad
   H^{\tau} \circ \iota = Id_{\minimum E}
 \]

 This almost makes the previous diagram into a split coequaliser, the only
 equation missing being $H^\tau \circ \source R = H^\tau \circ \target R$. We define this as
 our notion of confluence, which implies immediately that if $R$ is a
 terminating and confluent relation, then the quotient $E/R$ is isomorphic to
 the object of normal forms $\minimum E$. When $\mathcal C$ is a locally
 finitely presented category, We also define a notion of local confluence and
 show that, together with termination, it implies confluence, recovering
 Newman's Lemma in this general setting.
 
\paragraph*{Organisation}

In Section \ref{sec:abs_alg_rew}, we start by recalling some classical
definitions and results of abstract and algebraic rewriting. Those will be
useful throughout the paper in order to compare them to the methods developed in
the subsequent sections.

In Section \ref{sec:graphs_as_relations}, we investigate elementary properties
of graphs internal to a category $\mathcal C$, such as reflexivity, symmetry and
transitivity. We also define the notions of reflexive, symmetric and transitive
closures of a relation.

Section \ref{sec:termination} is devoted to the notion of termination of a
graph.  We define termination of a relation as the existence of a local strategy
compatible with this filtration. We finish this section by examples, showing in
particular that for any terminating (abstract or algebraic) relation induces a
terminating graph.

In Section \ref{sec:confluence}, we define a notion of global strategy for a
graph $R$ on a filtered object $E$, and define when such a strategy is
confluent. We show that whenever there exists a confluent strategy, then the
quotient of $E$ by $R$ is isomorphic to the object of normal forms. We show that
any local strategy induces a global strategy, although not necessarily a
confluent one.

Finally in Section \ref{sec:loc_to_glob_confluence} we restrict ourselves to the
case when $\mathcal C$ is a locally finitely presented category. In this case,
we are able to give a criterion for a local strategy to induce a confluent
strategy. Interpreting this criterion as a form of local confluence, we obtain a
general proof of Newman's and Bergman's diamond Lemmas.


\section{Abstract and algebraic rewriting}\label{sec:abs_alg_rew}

In this section, we recall some of the standard results of abstract and
algebraic rewriting. This will be useful in subsequent sections for comparison
with our general theory. Since in Section \ref{sec:confluence} confluence will
only be defined on terminating graphs, we do not dwell on the case of confluent
but non-terminating relations. None of the results presented here are new,
although perhaps algebraic rewriting is only rarely presented at this level of
generality.

Contrary to the rest of this article, where we will model relations by graphs,
here we stick to the more usual presentation of subsets of $E \times E$. While this
choice is somewhat arbitrary, it will be useful later on in order to distinguish
between termination in the sense of relations, and termination in the sense of
graphs.

\begin{definition}\label{def:set_relations}
  Let $E$ be a set. By a (set-theoretic) relation, we mean a subset of
  $E \times E$. We say that such a relation $\to$ is \emph{terminating} if there exists
  no infinite sequence $a_0,a_1,a_2,\ldots \in E$ such that $a_0 \to a_1 \to a_2 \to \ldots $.

  An element $e \in E$ is said to a normal form for $\to$ if there exists no
  $y \in E$ such that $x \to y$. We denote by $NF(\to)$ the set of normal forms for $\to$.

  We denote by $\xrightarrow =$, $\xrightarrow +$, $\xrightarrow *$ and
  $\xleftrightarrow *$ respectively the reflexive, transitive,
  reflexive-transitive and symmetric-reflexive-transitive closure of $\to$.  

  We say that $\to$ is confluent if for any $u,v,w$ such that
  $u \xrightarrow * v$ and $u \xrightarrow * w$, there exists $z$ such that
  $v \xrightarrow * z$ and $w \xrightarrow * z$.

  We say that $\to$ is locally confluent if for any $u,v,w$ such that
  $u \rightarrow v$ and $u \rightarrow w$, there exists $z$ such that
  $v \xrightarrow * z$ and $w \xrightarrow * z$.

  Finally, we denote by $E/\xleftrightarrow *$ the quotient of $E$ by the equivalence relation
  generated by~$\to$.
\end{definition}

\begin{proposition}\label{prop:set_equiv_confluence}
  Let $\to$ be a terminating relation on a set $E$. The following are equivalent:
  \begin{description}
  \item[SC1\label{itm:set_local_conf}] The relation $\to$ is locally confluent.
  \item[SC2\label{itm:set_conf}] The relation $\to$ is confluent.
  \item[SC3\label{itm:set_CR_prop}] For any $u,v \in E$ such that
    $u \xleftrightarrow * v$, there exists $w$ such that $u \xrightarrow * w$
    and $v \xrightarrow * w$.
  \item[SC4\label{itm:set_nf}] The canonical map
    $NF(\to) \longrightarrow (E/\xleftrightarrow *)$, sending any normal form to its equivalence
    class modulo $\xleftrightarrow *$, is a bijection.
  \end{description}
\end{proposition}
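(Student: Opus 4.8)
The plan is to prove the four-way equivalence via the cycle of implications $\refitem{itm:set_local_conf} \Rightarrow \refitem{itm:set_conf} \Rightarrow \refitem{itm:set_CR_prop} \Rightarrow \refitem{itm:set_nf} \Rightarrow \refitem{itm:set_local_conf}$, which minimizes the number of arguments to carry out. The most substantial step, and the real content of Newman's Lemma, is the implication $\refitem{itm:set_local_conf} \Rightarrow \refitem{itm:set_conf}$; the remaining three implications are comparatively soft.

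For the main implication $\refitem{itm:set_local_conf} \Rightarrow \refitem{itm:set_conf}$, I would argue by Noetherian (well-founded) induction on the element $u$, which is legitimate precisely because $\to$ is terminating. So suppose the confluence property holds for every element strictly below $u$, and suppose $u \xrightarrow * v$ and $u \xrightarrow * w$. If either reduction is empty, confluence is immediate, so assume $u \to u_1 \xrightarrow * v$ and $u \to u_2 \xrightarrow * w$. Applying local confluence to $u \to u_1$ and $u \to u_2$ yields some $z_0$ with $u_1 \xrightarrow * z_0$ and $u_2 \xrightarrow * z_0$. Since $u_1$ lies strictly below $u$, the induction hypothesis applied to the span $v \xleftarrow{*} u_1 \xrightarrow * z_0$ produces a common reduct $z_1$ of $v$ and $z_0$; then applying the induction hypothesis again to $u_2$ (also strictly below $u$), now on the span $z_1 \xleftarrow{*} z_0 \xleftarrow{*} u_2 \xrightarrow * w$ gives a common reduct $z$ of $z_1$ and $w$. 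Chasing the diagram, $z$ is a common reduct of $v$ and $w$, which closes the induction. The delicate point here — the one to state carefully — is the correct bookkeeping of which elements are strictly below $u$ so that each appeal to the induction hypothesis is valid.

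The implication $\refitem{itm:set_conf} \Rightarrow \refitem{itm:set_CR_prop}$ is a standard induction on the length of the zig-zag witnessing $u \xleftrightarrow * v$: confluence lets one repeatedly ``push down'' peaks $\cdot \xleftarrow * \cdot \xrightarrow * \cdot$ into valleys, collapsing the zig-zag to a single valley $u \xrightarrow * w \xleftarrow * v$. For $\refitem{itm:set_CR_prop} \Rightarrow \refitem{itm:set_nf}$, I would check injectivity and surjectivity of the canonical map separately. Surjectivity uses termination: starting from any $x \in E$ and repeatedly reducing, the absence of infinite $\to$-sequences forces the process to halt at a normal form $\bar x$, which maps to the class of $x$. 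For injectivity, if two normal forms $u,v$ satisfy $u \xleftrightarrow * v$, then $\refitem{itm:set_CR_prop}$ gives a common reduct $w$ with $u \xrightarrow * w$ and $v \xrightarrow * w$; but $u$ and $v$ are normal forms, so both these reductions are empty, forcing $u = w = v$.

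Finally $\refitem{itm:set_nf} \Rightarrow \refitem{itm:set_local_conf}$ is the easiest direction. Given $u \to v$ and $u \to w$, termination lets me reduce $v$ and $w$ to normal forms $\bar v$ and $\bar w$ respectively. Since $v \xleftrightarrow * u \xleftrightarrow * w$, the normal forms $\bar v$ and $\bar w$ are equivalent modulo $\xleftrightarrow *$, hence have the same image under the canonical map; injectivity from $\refitem{itm:set_nf}$ then forces $\bar v = \bar w$, which is the required common reduct $z$. I expect no obstacle here beyond invoking termination to guarantee normal forms exist. Overall the only genuinely hard step is the well-founded induction in $\refitem{itm:set_local_conf} \Rightarrow \refitem{itm:set_conf}$; everything else is routine diagram-chasing once termination is used to produce normal forms.
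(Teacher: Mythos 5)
Your proof is correct: the cycle \refitem{itm:set_local_conf} $\Rightarrow$ \refitem{itm:set_conf} $\Rightarrow$ \refitem{itm:set_CR_prop} $\Rightarrow$ \refitem{itm:set_nf} $\Rightarrow$ \refitem{itm:set_local_conf} covers all four equivalences, each step is sound, and the bookkeeping in the Noetherian induction (both appeals to the induction hypothesis are at elements strictly below $u$ in the well-founded order generated by $\to$) is handled correctly. There is, however, no proof in the paper to compare against: Section \ref{sec:abs_alg_rew} states this proposition as classical background, explicitly noting that none of its results are new, and omits the argument entirely. The only meaningful comparison is with the paper's own Theorem \ref{thm:Newman_general}, which re-proves the hard implication \refitem{itm:set_local_conf} $\Rightarrow$ \refitem{itm:set_conf} internally to a locally finitely presentable category; there your well-founded induction on \emph{elements} of $E$ is unavailable (objects of a general category have no elements), and is replaced by induction over a terminating filtration of the graph $R$ (the lc-structure), with the confluent strategy $(H,H^\tau)$ playing the role of your normalization map $x \mapsto \bar x$. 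Your argument is the elementary, element-wise one, and it is exactly what the proposition at this level of generality requires.
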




\begin{remark}
  Property \refitem{itm:set_CR_prop} above is known as the Church-Rosser
  property, and is equivalent to confluence even without the hypothesis that
  $\to$ is terminating.
\end{remark}

Throughout this article, we fix $k$ a characteristic $0$ field. We now give a
quick presentation of algebraic rewriting.

\begin{definition}
  Let $X$ be a set and let $kX$ denote the vector space spanned by $X$. Any
  $u \in kX$ can be written in a unique way as $\sum_{i=1}^n \lambda_i x_i$, where
  $\lambda_i \neq 0$ and $x_i \in X$. The set $\{x_1,\ldots,x_n\}$ is called the \emph{support} of
  $u$ and is denoted by $\supp(u)$.

  An \emph{algebraic relation} $\to$ on $kX$ is a subset of $X \times kX$ such that
  whenever $x \to u$, then $x \notin \supp (u)$.  Such a relation induces two
  (set-theoretic) relations on $kX$, denoted $\algto{}$ and
  $\wfto{}$. The first is defined by
  $\lambda x + v \algto{} \lambda u + v$ for all $\lambda \in k$,
  $u,v \in kX$ and $x \in X$ such that $x \to u$.

  The second one is the restriction of $\algto{}$ to the case when
  $\lambda \neq 0$ and $x \notin \supp(v)$.

  Let $\leq$ be a terminating order on $X$. We say that $\to$ is \emph{terminating
    with respect to $\leq$} if whenever $x \to u$, then any $y$ appearing in the
  support of $u$ is strictly smaller than $x$. This implies in particular that
  $\wfto{}$ is terminating, as a set-theoretic relation.

  The set of normal forms for $\wfto{}$ forms a sub-vector space of
  $kX$, a basis of which is given by elements $x \in X$ such that there exists no
  $u \in kX$ such that $x \to u$. The vector space of normal forms is denoted
  $NF(\to)$.

  Finally, we denote by $\xleftrightarrow *$ the congruence generated by $\to$
  (which is also the equivalence relation generated by $\algto{}$).
\end{definition}

\begin{note}
  The use of a terminating order on $X$ to define the termination of an
  algebraic relation above is fairly standard in the literature (and is the one
  used by Bergman in \cite{Bergman78}), but is actually not really necessary:
  one could instead request that $\wfto{}$ is terminating, as a set-theoretic
  relation. This is done for example in \cite[Section3.2]{GHM_algebra}.

  Although this allows one to deduce many results on algebraic rewriting from
  abstract rewriting, we argue that it is still not satisfying: the definition
  of the relation $\wfto{}$ is very specific to the category at hand (here
  $\mathcal C = \Vect_k$) and cannot be generalised to other categories, such as
  the category of groups.
\end{note}

The following lemma clarifies the relationship between the relations $\to$,
$\wfto{}$ and $\algto{}$.
\begin{lemma}
  Let $X$ be a set and $\to$ be an algebraic relation on $kX$.
  \begin{itemize}
  \item For any $x \in X$, the relations $x \to u$ and $x \wfto{} u$ are
    equivalent.

  \item If $u \algto{} v$, there exists $w \in kX$ such that
    $ u \wfto{=} w \xleftarrow[wf]{=} v $
    \end{itemize}
\end{lemma}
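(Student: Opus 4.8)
The plan is to treat the two bullets separately, in each case by a direct computation tracking supports and coefficients. For the first bullet, the implication $x \to u \Rightarrow x \wfto{} u$ is immediate: taking $\lambda = 1$ and context $0$ in the definition of $\algto{}$ yields $x \algto{} u$, and since $1 \neq 0$ and $x \notin \supp(0)$ this is already a legal $\wfto{}$-step. For the converse, suppose $x \wfto{} u$ where $x \in X$ is a single basis vector. By definition the step comes from some $x' \to t$ with $x = \lambda x' + s$, $u = \lambda t + s$, $\lambda \neq 0$ and $x' \notin \supp(s)$. The key observation is that under these hypotheses $\supp(\lambda x' + s) = \{x'\} \sqcup \supp(s)$; since this must equal $\supp(x) = \{x\}$, I would conclude $x' = x$ and $s = 0$, whence $\lambda = 1$ and $u = t$, so that $x \to u$.

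For the second bullet, I would first record that an arbitrary step $u \algto{} v$ is witnessed by data $x \to t$, a scalar $\lambda$ and a vector $s$ with $u = \lambda x + s$ and $v = \lambda t + s$, and that $x \notin \supp(t)$ by the very definition of an algebraic relation. The crucial manoeuvre is to extract the $x$-component of the possibly-offending context $s$: write $s = \mu x + s'$ with $\mu \in k$ (allowing $\mu = 0$) and $x \notin \supp(s')$. Then $u = (\lambda + \mu) x + s'$ and $v = \lambda t + \mu x + s'$, and since $x \notin \supp(t) \cup \supp(s')$ the coefficient of $x$ in $u$ is exactly $\lambda + \mu$ while that in $v$ is exactly $\mu$.

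With this normal form in hand I would propose the single common reduct $w = (\lambda + \mu)t + s'$ and verify $u \wfto{=} w$ and $v \wfto{=} w$ uniformly. When $\lambda + \mu \neq 0$, the step $u = (\lambda+\mu)x + s' \wfto{} (\lambda + \mu)t + s' = w$ is a genuine $\wfto{}$-step since the coefficient is nonzero and $x \notin \supp(s')$; when $\lambda + \mu = 0$ one has $u = s' = w$, so the reflexive case of $\wfto{=}$ applies. Symmetrically, $v = \mu x + (\lambda t + s') \wfto{} (\lambda + \mu)t + s' = w$ when $\mu \neq 0$, using $x \notin \supp(\lambda t + s')$, and $v = w$ when $\mu = 0$.

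The main obstacle — in fact the only subtle point — is precisely this bookkeeping of coefficients. One must notice that the reflexive closure in $\wfto{=}$ is genuinely needed: the two degenerate configurations excluded from the definition of $\wfto{}$ (namely $\lambda = 0$ and $x \in \supp(s)$) are exactly the situations in which a coefficient collapses to zero and one of the two legs degenerates to an identity rather than a rewriting step. Once the decomposition $s = \mu x + s'$ is performed, the symmetric choice $w = (\lambda+\mu)t + s'$ makes both legs fall out simultaneously, so no further case analysis beyond the vanishing of $\lambda + \mu$ and of $\mu$ is required.
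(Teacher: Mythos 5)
Your proof is correct, and it is worth noting that it does more than the paper does: the paper gives no argument at all for this lemma, dismissing the first bullet as ``an easy verification'' and deferring the second to the proof of Theorem~4.2.1 of the cited work of Guiraud, Hoffbeck and Malbos. Your first bullet is exactly the easy verification the paper alludes to (the support computation $\supp(\lambda x' + s) = \{x'\} \sqcup \supp(s)$ forcing $x'=x$, $s=0$, $\lambda=1$ is the whole content). For the second bullet, your decomposition $s = \mu x + s'$ with $x \notin \supp(s')$, followed by the symmetric choice of common reduct $w = (\lambda+\mu)t + s'$, is precisely the kind of coefficient bookkeeping that the cited reference carries out; you correctly use that $x \notin \supp(t)$ (the defining condition of an algebraic relation) to legitimize the leg $v = \mu x + (\lambda t + s') \wfto{} w$ when $\mu \neq 0$, and you correctly observe that the reflexive closure $\wfto{=}$ is unavoidable because either leg can degenerate to an equality (when $\lambda+\mu = 0$ or $\mu = 0$ respectively). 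What your self-contained argument buys over the paper's citation is transparency: it makes visible exactly why the lemma needs $\wfto{=}$ rather than $\wfto{}$, which is the subtlety the paper gestures at without exhibiting. The only stylistic caveat is that your closing remark identifying the degenerate configurations with the two exclusions in the definition of $\wfto{}$ is heuristic rather than precise, but nothing in the proof depends on it.
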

\begin{proof}
  While the first point is an easy verification, the second is more subtle. A
  proof can be found in the proof of \cite[Theorem 4.2.1]{GHM_algebra}.
\end{proof}

\begin{proposition}
  Let $X$ be a set equipped with a terminating order $\leq$, and $\to$ be an
  algebraic relation on $kX$ which is terminating with respect to $\leq$. The
  following are equivalent:
  \begin{description}
  \item[AC1\label{itm:A_local_conf}] For any $x \in X$, if $x \to u$ and
    $x \to v$ then there exists $w$ such that $u \wfto * w$ and
    $v \wfto{*} w$.
  \item[AC2\label{itm:A_conf}] The relation $\wfto{}$ satisfies the
    equivalent properties of Proposition \ref{prop:set_equiv_confluence}.
  \item[AC3\label{itm:A_nf}] The canonical map
    $NF(\to) \longrightarrow (kX/\xleftrightarrow *)$ sending any normal form to its equivalence
    class modulo $\xleftrightarrow *$, is an isomorphism.
  \end{description}
\end{proposition}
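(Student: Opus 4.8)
The plan is to prove the equivalence of AC1, AC2, AC3 by relating algebraic rewriting back to the set-theoretic case via the relation $\wfto{}$, leveraging Proposition~\ref{prop:set_equiv_confluence} as much as possible. The key observation is that $\wfto{}$ is a terminating set-theoretic relation on $kX$ (as noted in the definition), so Proposition~\ref{prop:set_equiv_confluence} already tells us that local confluence, confluence, the Church--Rosser property, and the normal-form bijection are all equivalent for $\wfto{}$. The crux is therefore to show that the "algebraic" local confluence condition AC1 — which only quantifies over $x \in X$ and the generating steps $x \to u$, $x \to v$ — is equivalent to ordinary local confluence of $\wfto{}$, and then to match up AC3 with SC4.

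First I would establish that AC1 implies local confluence of $\wfto{}$ in the sense of \refitem{itm:set_local_conf}. A single step $p \wfto{} q$ has the form $\lambda x + w \wfto{} \lambda u + w$ with $\lambda \neq 0$ and $x \notin \supp(w)$. Given two such steps out of a common source $p$, I would split into cases according to whether they rewrite the same basis element $x$ or two distinct elements. The distinct-element case is a standard "disjoint redexes commute" argument: the two rewrites act on different coordinates of $p$, so they can be performed in either order and both reach a common element after one further step each (this is where the linearity and the condition $x \notin \supp(u)$ are used to control supports). The same-element case is exactly where AC1 feeds in: reducing $x$ to $u$ versus $x$ to $v$, AC1 gives a common $\wfto{}$-reduct $w$ of $u$ and $v$, and I would then need to lift this confluence of $u,v$ in the ambient linear context $\lambda(-) + w$, again using the lemma relating $\algto{}$ and $\wfto{}$ to repair any coefficients that drop to zero or supports that collide. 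The reverse implication, that local confluence of $\wfto{}$ gives AC1, is immediate since each $x \to u$ is itself a $\wfto{}$-step by the first point of the preceding lemma.

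Having matched AC1 with \refitem{itm:set_local_conf}, the equivalence AC1 $\Leftrightarrow$ AC2 follows directly, since AC2 is by definition the assertion that $\wfto{}$ satisfies the properties of Proposition~\ref{prop:set_equiv_confluence}, and termination of $\wfto{}$ supplies the hypothesis that proposition requires. For AC2 $\Leftrightarrow$ AC3, I would identify the map in AC3 with the map in \refitem{itm:set_nf} for the relation $\wfto{}$: the set $NF(\wfto{})$ of set-theoretic normal forms is precisely the vector space $NF(\to)$ described in the definition, and the congruence $\xleftrightarrow *$ generated by $\to$ coincides with the equivalence relation generated by $\algto{}$, which in turn has the same equivalence classes as $\wfto{}$ by the second point of the lemma (every $\algto{}$-step factors through $\wfto{=}$-steps). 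The only extra content beyond the set-theoretic statement is upgrading "bijection" to "isomorphism of vector spaces," which amounts to checking the canonical map is linear; this is routine since both the normal-form space and the quotient inherit their linear structure from $kX$ and the map is induced by the identity on $kX$.

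The main obstacle I anticipate is the same-element case of the local confluence lifting in the first step: one must carefully track how the coefficient $\lambda$ and the context $w$ interact with the reduction $u \wfto{*} w'$ provided by AC1, since applying a reduction inside a linear combination can cause cancellation (a coefficient becoming zero) or support overlap that prevents a naive step-by-step simulation. This is exactly the subtlety that the second bullet of the preceding lemma is designed to handle — rewriting $\algto{}$ steps and then correcting back into valid $\wfto{}$ steps — so I expect the proof to route the linear-context lifting through $\algto{}$ and then invoke that lemma to return to $\wfto{}$, rather than attempting to stay within $\wfto{}$ throughout.
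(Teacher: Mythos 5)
Your overall architecture---push everything through Proposition~\ref{prop:set_equiv_confluence} applied to the terminating set-theoretic relation $\wfto{}$---works for the easy parts, and those parts of your write-up are correct: \refitem{itm:A_conf} $\Rightarrow$ \refitem{itm:A_local_conf} is immediate because each $x \to u$ is itself a $\wfto{}$-step, and the identification of \refitem{itm:A_nf} with \refitem{itm:set_nf} for $\wfto{}$ (same normal forms, same quotient since the repair lemma makes the equivalence relations generated by $\algto{}$ and $\wfto{}$ coincide, plus linearity of the canonical map) is sound. The genuine gap is in \refitem{itm:A_local_conf} $\Rightarrow$ \refitem{itm:set_local_conf}, which is where the entire difficulty of the proposition lives, and both of your local arguments fail there. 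The distinct-element case is \emph{not} a standard disjoint-redex commutation. Take rules $x \to y$ and $y \to z$ with $x > y > z$ (so \refitem{itm:A_local_conf} holds trivially) and $p = x - y$: rewriting $x$ gives $q_1 = 0$, rewriting $y$ gives $q_2 = x - z$, and \emph{no} step at all can be performed from $q_1$ because the monomial $y$ was cancelled; the square ``one further step each'' simply does not exist. The peak is joinable only via the longer reduction $q_2 \wfto{} y - z \wfto{} 0$, and proving that such peaks always close is exactly the hard content, not a routine commutation.

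The same-element case is worse: as you set it up, it is circular. The lemma you lean on repairs a \emph{single} $\algto{}$ step into a $\wfto{=}$-valley. Lifting the valley $u \wfto{*} z \xleftarrow[wf]{*} v$ supplied by \refitem{itm:A_local_conf} through the context $\lambda(-) + c$ yields multi-step reductions $q_1 \algto{*} \lambda z + c \xleftarrow[alg]{*} q_2$; applying the repair lemma to each step turns these into a \emph{zigzag} of $\wfto{=}$-steps, and flattening a zigzag into a common $\wfto{}$-reduct is precisely the Church--Rosser property \refitem{itm:set_CR_prop} for $\wfto{}$---the very statement you are trying to establish. You also cannot rescue this with a Newman-style induction localized at $p$: the intermediate terms of the zigzag need not be $\wfto{}$-descendants of $p$ (coefficients get cancelled, as in the example above), so no induction hypothesis is available at those peaks. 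This is why the actual proofs in the literature---Bergman's diamond lemma, and Theorem 4.2.1 of \cite{GHM_algebra}, which is where the paper's repair lemma is extracted from (the paper itself states the present proposition without proof, deferring to that reference)---invert your plan: they prove \refitem{itm:A_local_conf} $\Rightarrow$ confluence \emph{directly}, by a well-founded induction on the terminating order on $X$ (or a derived order on $kX$), and obtain local confluence of $\wfto{}$ only afterwards as a corollary. Without setting up that induction explicitly, your step ``AC1 implies SC1'' assumes the theorem rather than proving it.
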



\begin{example}
  Take $kX = k[x]$: in other words, $X = \{1,\ldots,x^n,\ldots\}$. We can equip $X$ with the
  terminating order induced by $x^i < x^{i+1}$. Given a unitary polynomial
  $P \in k[x]$ we can write $P = x^n + \sum_{i = 0}^{n-1} x^i$. This induces a
  (confluent and terminating) algebraic rewriting system
  $x^n \to - \sum_{i = 0}^{n-1} x^i$.

  The quotient $kX/\xleftrightarrow *$ is none other than $k[X]/(P)$, while
  $NF(\to)$ is the subspace of polynomials of degree at most $(n-1)$. Taking
  $k = \mathbb R$ and $P = x^2 + 1$, the isomorphism of property
  \refitem{itm:A_nf} is none other than the canonical isomorphism between
  $\mathbb R + x \mathbb R$ and $R[x]/(x^2+1) = \mathbb C$.
\end{example}

\section{Graphs as relations}\label{sec:graphs_as_relations}

We fix a category $\mathcal C$, and suppose that $\mathcal C$
is finitely complete has all countable (including finitary) coproducts.

In this section, we study define some elementary properties of graphs, seen as
generalized relations.



\begin{definition}
  Let $\mathcal C$ be a category, and $E$ an object of $\mathcal C$. We
  denote by $\Graph_E$ the category of graphs over $E$.
  Objects are triples $(R,\source R,\target R)$ , where $R$ is an object of
  $\mathcal C$ and $\source R : R \to E$ and $\target R : R \to E$ are maps in $\mathcal C$.

  \[
    \begin{tikzcd}
      & R \arrow[ld, "\source R"'] \arrow[rd, "\target R"] & \\
      E & & E
    \end{tikzcd}
  \]

  A morphism of graphs from $(R,\source R,\target R)$ to $(S,\source S,\target S)$
  is a map $R \to S$ commuting with $\source{}$ and $\target{}$.

  We will often denote a graph $(R,\source R,\target R)$ simply by $R$.
\end{definition}

\begin{definition}\label{defn:operations_on_graphs}
  Let $E$ be an object of $\mathcal C$, and let $R,S$ be two graphs over $E$. 
  \begin{itemize}
  \item $E$ canonically inherits the structure of a graph using
    $\source E = id_E$ and $\target E = id_E$.
  \item The product of $R$ and $S$, denoted $RS$, is the graph defined by
    the following pullback, with $\source{RS} = \source R \circ \pi_1$ and
    $\target{RS} = \target R \circ \pi_2$:
    \[
      \begin{tikzcd}
        & & RS \ar[ld, "\pi_1"'] \ar[rd, "\pi_2"] \ar[dd, phantom, "\llcorner" labl , near start] & & \\
        & R \ar[ld, "\source R"'] \ar[rd, "\target R"]& & S \ar[ld, "\source S"'] \ar[rd, "\target S"] & \\
        E & & E & & E
      \end{tikzcd}
    \]
  \item The sum of $R$ and $S$, denoted $R + S$, is the graph
    $(R \coprod S,\source R \coprod \source S, \source R \coprod \source S)$, where
    $\coprod$ denotes the coproduct in $\mathcal C$.
  \item The transitive closure of $R$, denoted $R^+$, is the graph over $E$ with
    underlying object the countable coproduct
    $\coprod_{i = 1}^{\infty} R^{\times_E i}$, with $\source {R^+}$ (resp.
    $\target {R^+}$) defined on the component $R^{\times_E i}$ as the composite
    $\source R \circ \pi_1$ (resp.  $\target R \circ \pi_i$) for any $i \geq 1$.
  \item The transitive reflexive closure of $R$, denoted $R^*$, is the graph
    $R^+ + E$.
  \item The opposite of $R$, denoted $R^{\circ}$, is the graph obtained by
    reversing the source and target of $R$: $\source {R^\circ} = \target R$ and
    $\target {R^\circ} = \source R$.
  \item The reflexive symmetric transitive closure of $R$, denoted $R^\sym$, is
    the graph $(R+R^\circ)^*$.
  \end{itemize}
\end{definition}

\begin{definition}
  Let $R$ be a graph over an object $E$ in a category $\mathcal C$.
  We say that $R$ is:
  \begin{itemize}
  \item reflexive if there exists a map (of graphs) $u_R : E \to R$,
  \item transitive if there exists a map $m_R : R R \to R$,
  \item symmetric if there exists a map $s_R: R^\circ \to R$.
  \end{itemize}
\end{definition}

The closure terminology of Definition \ref{defn:operations_on_graphs} is
justified by the following lemma:
\begin{lemma}\label{lem:sanity_check}
  Let $R$ be a graph over an object $E$ in a category $\mathcal C$.  Then
  $R + E$ is a universal among the reflexive graphs under $R$, in the sense that
  $R + E$ is reflexive, and for any map $f : R \to S$ in $\Graph_E$, where $S$ is
  a reflexive graph, then there exists a unique map $\tilde f : R + E \to S$
  preserving the reflexive structure such that the composite with the inclusion
  $R \to R+E$ yields $f$.

  Similarly:
  \begin{itemize}
  \item The graph $R + R^\circ$ is universal among symmetric graphs under $R$.
  \item The functor $R^+$ is universal among transitive graphs under $R$.
  \item The functor $R^*$ is universal among reflexive transitive graphs under $R$.
  \item The functor $R^\sym$ is universal among reflexive transitive symmetric graphs under $R$.
  \end{itemize}
\end{lemma}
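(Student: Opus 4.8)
The plan is to prove each universal property by the same general recipe, since they all follow the pattern of a free/universal construction relative to the forgetful functor from the relevant subcategory of structured graphs to $\Graph_E$. Concretely, for each case I would do three things: (i) exhibit the claimed structure map on the candidate object, (ii) given a map $f : R \to S$ into a structured graph $S$, construct the induced map, and (iii) check uniqueness among structure-preserving maps extending $f$. Because the constructions are built from coproducts and fibre products over $E$, the induced maps come for free from the corresponding universal properties in $\mathcal C$, and the verifications are diagram chases that commute with $\source{}$ and $\target{}$.

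For the reflexive case, $R + E$ is reflexive via the coproduct inclusion $E \to R + E$, which is a map of graphs since $E$ carries the identity source and target. Given $f : R \to S$ with $S$ reflexive via $u_S : E \to S$, the map $\tilde f : R + E \to S$ is the copairing of $f$ and $u_S$; it preserves the reflexive structure by construction, restricts to $f$ along $R \to R + E$, and is unique because a structure-preserving extension is forced to equal $u_S$ on the $E$-summand and $f$ on the $R$-summand. The symmetric case $R + R^\circ$ is entirely analogous: the required symmetry map $(R + R^\circ)^\circ = R^\circ + R \to R + R^\circ$ is the swap, and an extension of $f$ is forced on the $R^\circ$-summand by $f$ together with the symmetry $s_S : S^\circ \to S$, namely $s_S \circ f^\circ$.

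The transitive case is where I expect the only real subtlety. For $R^+ = \coprod_{i \geq 1} R^{\times_E i}$, I would first verify that the multiplication map $m : R^+ R^+ \to R^+$ is well defined: on the component $R^{\times_E i} \times_E R^{\times_E j}$ one uses the canonical isomorphism with $R^{\times_E (i+j)}$, which requires that products distribute over the countable coproduct appropriately, and checking that $\source{}$ and $\target{}$ match up. Given $f : R \to S$ with $S$ transitive via $m_S$, the induced map $\tilde f$ is defined componentwise by iterating $m_S$: on $R^{\times_E i}$ it is the $i$-fold composite $m_S^{(i)} \circ f^{\times_E i}$. Uniqueness forces this value on each component once we demand compatibility with the transitive structure and agreement with $f$ on $R^{\times_E 1}$. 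The main obstacle, then, is the bookkeeping ensuring $m$ is associative and that $\tilde f$ respects it; this is exactly the statement that $R^+$ is the free semigroup-like object on $R$ in $\Graph_E$, and the distributivity of $\times_E$ over countable coproducts is what makes it go through.

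Finally, $R^*$, $R^\circ$ and $R^\sym$ reduce to compositions of the above. For $R^* = R^+ + E$ I would combine the transitive universal property with the reflexive one, observing that a reflexive transitive graph under $R$ factors uniquely first through $R^+$ (transitivity) and then through the free reflexive structure, so $R^* = (R^+) + E$ inherits the joint universal property. The case $R^\sym = (R + R^\circ)^*$ is then immediate by first freely symmetrizing and then taking the reflexive transitive closure, using that a reflexive transitive symmetric structure under $R$ corresponds exactly to a reflexive transitive structure under the symmetrization $R + R^\circ$. In each of these composite cases the proof is purely formal once the three base cases are in hand, so I would state them as corollaries of the transitive and reflexive arguments rather than repeating the verifications.
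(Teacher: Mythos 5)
The paper itself gives no proof of this lemma (it is stated and left as a sanity check), so there is nothing to compare your argument against; judged on its own, your general recipe is the right one, and your reflexive case is complete and correct: the copairing $[f, u_S] : R + E \to S$ exists by the universal property of the coproduct, is a graph map, preserves the unit, and is unique because its restrictions to both summands are forced.

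The genuine gap is precisely at the step you call ``the only real subtlety'', and it is worse than bookkeeping. Defining $m_{R^+}$ componentwise via $R^{\times_E i} \times_E R^{\times_E j} \cong R^{\times_E (i+j)}$ requires $\times_E$ to distribute over countable coproducts, but this is \emph{not} among the paper's standing hypotheses (Section \ref{sec:graphs_as_relations} assumes only finite completeness and countable coproducts), and it genuinely fails in $\mathcal C = \Vect_k$, one of the paper's two motivating examples: for spaces surjecting onto $E$, the pullback $(A_1 \oplus A_2) \times_E B$ has dimension $\dim A_1 + \dim A_2 + (\dim B - \dim E)$, whereas $(A_1 \times_E B) \oplus (A_2 \times_E B)$ has dimension $\dim A_1 + \dim A_2 + 2(\dim B - \dim E)$. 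Moreover concatenation really does fail to descend to $R^+ R^+$ there: take $E = k$ and $R = ke$ with $\source R = \target R = \mathrm{id}$, so $R^{\times_E i} = k e_i$ and $R^+ R^+$ is the space of pairs of linear combinations with equal coefficient sums; the formal combination $(e_1,e_1) - (e_2,e_1) - (e_1,e_3) + (e_2,e_3)$ in $\coprod_{i,j} R^{\times_E i} \times_E R^{\times_E j}$ maps to $0$ in $R^+ R^+$, yet concatenation sends it to $e_2 - e_3 - e_4 + e_5 \neq 0$. So under the stated hypotheses your construction of the transitive structure cannot be carried out, and with it the $R^+$, $R^*$ and $R^\sym$ cases collapse; the proof does go through verbatim once one assumes coproducts are universal (pullback-stable), as in $\Set$ or any extensive category, so you should add that as an explicit hypothesis --- doing so in fact exposes an assumption the paper itself silently needs, e.g.\ for the ``canonical map'' $\mu_R : R R^\sym \to R^\sym$ in Section \ref{sec:confluence}. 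Two smaller points: since the paper's $m_S$ is not assumed associative, ``the $i$-fold composite $m_S^{(i)}$'' is ambiguous and you should fix a bracketing (say right-nested) both in defining $m_{R^+}$ and in the uniqueness induction; and with the paper's bare notion of symmetry (an $s_S : S^\circ \to S$, not necessarily involutive), your copairing $[f, s_S \circ f^\circ]$ satisfies the strict equation $\tilde f \circ s_{R+R^\circ} = s_S \circ \tilde f^\circ$ on the $R^\circ$-summand only; on the $R$-summand it forces $s_S \circ s_S \circ f = f$, so either $s_S$ must be taken involutive or ``preserving the symmetric structure'' must be read more weakly.
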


\begin{example}
  In the case $\mathcal C = \Set$, a graph over a set $E$ is just a graph with
  set of objects $E$, and reflexivity and symmetry coincide with the usual
  terminology.

  If $\mathcal C$ is a Mal'cev variety, that is $\mathcal C$ is the category of
  models of an algebraic theory containing a ternary operation $t$ such that
  $t(x,x,y) = y$ and $t(x,y,y) = x$, then reflexivity implies symmetry and
  transitivity. Mal'cev varieties include $\mathcal C = \Gp$, the category of
  groups, whose where the ternary operation is given by $t(x,y,z) =
  xy^{-1}z$. Other examples also include the category of abelian groups, vector
  spaces, or the one of heaps.

  In the case of $\mathcal C = \Vect_k$, the category of $k$-vector spaces (or
  indeed any abelian category), then a truncation of the Dold-Kan correspondence
  shows that the category of reflexive graphs is equivalent to the category of
  chain complexes concentrated in degrees $0$ and $1$.
\end{example}


Moving from a graph $R$ to one of its closure does not change the equivalence
relation presented by $R$. This is made formal by the following lemma.
\begin{lemma}\label{lem:coeq_of_closures}
  If $R$ is a graph on an object $E$ of $\mathcal C$, let us denote by $E/R$ the
  coequaliser of the diagram $\source R, \target R : R \to E$.

  Then for any graph $R$ over $E$, there are canonical isomorphisms
  $E/R = E/R^* = E/R^\sym$.
\end{lemma}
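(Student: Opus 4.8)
The plan is to prove the two isomorphisms $E/R = E/R^*$ and $E/R = E/R^\sym$ by exploiting the universal properties established in Lemma~\ref{lem:sanity_check}, together with the universal property of the coequaliser. The key observation is that the coequaliser $E/R$ is by definition the universal map $q : E \to Q$ in $\mathcal C$ satisfying $q \circ \source R = q \circ \target R$; equivalently, it is the universal object receiving a map from $E$ under which $R$ becomes \emph{reflexive, symmetric and transitive} in a trivial (degenerate) sense, because any map $q$ coequalising $\source R$ and $\target R$ sends both legs of $R$ to the same arrow. So the strategy is to show that coequalising $R$, coequalising $R^*$, and coequalising $R^\sym$ all amount to the same universal problem.

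First I would unfold what it means for a map $q : E \to Q$ to coequalise a given graph. For a graph $S$ over $E$, the condition $q \circ \source S = q \circ \target S$ says precisely that $q$, viewed as a graph morphism $E \to Q$ (where $Q$ carries the discrete graph structure with $\source Q = \target Q = id_Q$), factors the two legs of $S$ equally. Now I would argue in one direction that any $q$ coequalising $R$ also coequalises each of the closures: since $Q$ (with its discrete graph structure) is tautologically reflexive, symmetric and transitive, the universal properties from Lemma~\ref{lem:sanity_check} guarantee that the composite graph map $R \to Q$ extends uniquely along $R \to R^*$ (resp. $R \to R^\sym$) to a graph map $R^* \to Q$ (resp. $R^\sym \to Q$). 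Translating this graph-morphism statement back into the coequaliser condition yields $q \circ \source{R^*} = q \circ \target{R^*}$, and similarly for $R^\sym$. Conversely, because $R$ includes into $R^*$ and into $R^\sym$ as graphs over $E$, any $q$ coequalising the larger graph restricts to coequalise $R$.

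Having shown that the three coequaliser conditions on $q : E \to Q$ are logically equivalent for every target $Q$, I would conclude that the diagrams $\source R, \target R : R \to E$, $\source{R^*}, \target{R^*} : R^* \to E$, and $\source{R^\sym}, \target{R^\sym} : R^\sym \to E$ all have the same universal cocone, and hence the same coequaliser. This gives the canonical isomorphisms $E/R \cong E/R^* \cong E/R^\sym$, with the isomorphisms induced by the respective universal maps.

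The main obstacle I anticipate is being careful about the bookkeeping between two notions: a map $q$ \emph{coequalising} a parallel pair, and a \emph{graph morphism} into a discrete (or trivial) graph. The cleanest route is to verify that, for any object $Q$, endowing $Q$ with the discrete graph structure makes it simultaneously reflexive, symmetric and transitive, so that Lemma~\ref{lem:sanity_check} applies; then the factorisations it supplies are exactly the coequaliser extensions I need. A secondary subtlety is the transitive closure $R^+$ being an infinite coproduct: I must confirm that a map out of $R^+$ is determined componentwise on each $R^{\times_E i}$, so that coequalising $R$ forces coequalising every iterate $R^{\times_E i}$ by an easy induction using $q \circ \source R = q \circ \target R$ at each pullback step. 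Once these two points are handled, the argument reduces to invoking the universal properties already proved.
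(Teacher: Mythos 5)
Your overall strategy is the right one (and, for what it is worth, the paper states this lemma without any proof, so there is no authorial argument to compare against): show that a map $q : E \to Q$ coequalises $\source R, \target R$ if and only if it coequalises the corresponding pair for $R^*$ (resp.\ $R^\sym$), and conclude that the parallel pairs have the same coequalising cocones, hence canonically isomorphic coequalisers. The converse direction, restricting along the inclusions $R \to R^*$ and $R \to R^\sym$, is also fine as you state it.

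However, the central step as you formulate it does not typecheck. Lemma~\ref{lem:sanity_check} is a statement internal to the category $\Graph_E$: the reflexive/symmetric/transitive graph $S$ receiving the map must itself be a graph \emph{over $E$}, i.e.\ come equipped with maps $\source S, \target S : S \to E$, and the morphism $R \to S$ must commute with these. Your ``discrete graph'' $(Q, id_Q, id_Q)$ is a graph over $Q$, not over $E$ (in general there is no map $Q \to E$ at all), so $q$ is not a morphism in $\Graph_E$ and the universal properties proved in the paper cannot be invoked for it. The standard repair keeps your idea intact: since $\mathcal C$ is assumed finitely complete, form the kernel graph $K_q = E \times_Q E$ with the two projections as source and target. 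This \emph{is} an object of $\Graph_E$; it is reflexive (the diagonal), symmetric (the swap map), and transitive (the map $K_q K_q \to K_q$ induced by the outer projections, which factors through the pullback because $q$ equalises them), and by the universal property of the pullback, a morphism $R \to K_q$ in $\Graph_E$ exists precisely when $q \circ \source R = q \circ \target R$. Running your argument with $K_q$ in place of the discrete $Q$ yields $q \circ \source{R^\sym} = q \circ \target{R^\sym}$, and the lemma follows. Alternatively, the componentwise induction you relegate to a ``secondary subtlety'' is already a complete, self-contained proof: maps out of $R^\sym = (R+R^\circ)^+ + E$ are determined on each component $(R+R^\circ)^{\times_E i}$, and the pullback identities identifying the target of the $k$-th factor with the source of the $(k+1)$-st let you chain $q \circ \source{} = q \circ \target{}$ across the factors. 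I would promote that computation to the primary argument, since it avoids the base-change issue with Lemma~\ref{lem:sanity_check} entirely.
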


\section{Internalizing termination}\label{sec:termination}

We fix a category $\mathcal C$ with all pullbacks and countable coproducts.

In this section, we define what it means for a graph to be terminating. This
property is expressed in term of a rewriting strategy. When $\mathcal C = \Set$,
such a strategy associates to any $x$ which is not a normal form a rewriting
step of source $x$. We show how the examples of terminating rewriting relations
of Section \ref{sec:abs_alg_rew} fit into the general definition, but also that
some non-terminating relations can be terminating as graphs.


\begin{note}
  If $I$ is partially ordered set, we silently see $I$ as a category, with an
  arrow $x \to y$ whenever $x \leq y$ in $I$.
\end{note}

\begin{definition}
  A \emph{directed set} is a non-empty partially ordered set $I$ such that for all
  $x,y \in I$, there exists $z$ such that $x,y \leq z$.
  
  Let $E$ be an object of $\mathcal C$. A \emph{directed structure} on $E$ is the data
  of a directed set $I$ together with a functor $E_\bullet : I \to \mathcal C$ and a
  natural transformation $\iota_\bullet : E_\bullet \to E$ exhibiting $E$ as the colimit of
  $E_\bullet$. If $I$ is a directed set, the data of $E_\bullet$ and
  $\iota_\bullet$ is called an \emph{$I$-filtration} of $E$, and we slightly abuse notations by
  saying that $E$ is an \emph{$I$-filtered object} if it comes equipped with an
  $i$-filtration $(E_\bullet, \iota_\bullet)$.

  A morphism of $I$-filtered object from $E$ to $F$ is the data of a natural
  transformation $\eta : E_\bullet \Rightarrow F_\bullet$. Note that this induces a map from
  $E$ to $F$.

  We denote by ${\Filt I}_{\mathcal C}$ the category of $I$-filtered objects in
  $\mathcal C$. 
\end{definition}

\begin{note}
  Identifying an object $E$ of $\mathcal C$ with a functor from the terminal
  category $\top \xrightarrow{E} \mathcal C$, the data of a filtered object in
  $\mathcal C$ fits into the following diagram:
  \[
    \begin{tikzcd}
      I
      \arrow[rr, bend left, "E_\bullet"]
      \arrow[dr, bend right]
      &
      \arrow[d, yshift=.3cm, -Implies, double, "\iota_\bullet"]
      & \mathcal C
      \\
      & \top \ar[ur, "E"', bend right] &
    \end{tikzcd}
  \]
\end{note}




\begin{definition}
  Let $I$ be a directed set. For any $i \in I$ we denote by $I_{<i}$ the
  subset of elements of $I$ smaller than $i$, and by $\minimum I$ the set of
  minimal elements of $I$.

  If $E$ is an $I$-filtered object of a category $\mathcal C$, then for any
  $i \in I$, we denote by $E_{<i}$ (resp. $\minimum E$) the colimit of the
  restriction of the functor $E_\bullet$ to $I_{<i}$ (resp. $\minimum I$). For
  $i \in \minimum I$, we define $E_{<i}$ as $E_i$. This defines a functor 
  $E_{<\bullet} : I \setminus \minimum I \to \mathcal C$. The universal property of the colimit
  induces a natural transformation $\iota_{<i} : E_{<i} \to E$.

  If $f : E \to F$ is a map in $\mathcal C$, it induces a natural transformation
  $f_\bullet : E_\bullet \Rightarrow F$.  For any $i \in I \setminus \minimum I$, restricting
  $f_\bullet$ to $I_{<i}$ and passing to the colimit induces a map
  $f_{<i} : E_{<i} \to F$, which we extend for $i \in \minimum I$ by setting
  $f_{<i} := f \circ \iota_i$. This defines a natural transformation
  $f_{<\bullet} : E_{<\bullet} \Rightarrow F$.
\end{definition}

\begin{note}
  By definition, $\minimum I$ is a discrete partially ordered set (i.e. for all
  $i,j \in \minimum I$, if $i \leq j$ then $i = j$). As a result $\minimum E$ is
  actually given by the coproduct $\minimum E = \coprod_{i \in \minimum I} E_i$.
\end{note}

\begin{definition}
  A terminating graph on an $I$-filtered object $E$ of a category $\mathcal C$
  is the data of a  graph $R \in \Graph_{E}$, together with a map
  $h : E \to R + E$ and a natural transformation $h^\tau_\bullet : E_\bullet \Rightarrow E_{<\bullet}$ such that
\begin{description}
\item[TG1] $I$ is a terminating order.
\item[TG2\label{itm:source_h}] For all $i \in I$, $\source {R+E} \circ h = id_E$.
\item[TG3\label{itm:h_normal_form}] For all $i \in \minimum I$, $h \circ \iota_i = u_{R+E} \circ \iota_i$.
\item[TG4\label{itm:target_h}] For all $i \in I$, $\target {R+E} \circ h \circ \iota_i = \iota_{<i} \circ h^\tau_i$.
\end{description}

The last two axioms can be represented by the following diagram:

    \begin{tikzpicture}
       \node (I) at (0,0) {$I$};
      \node (top) at (2.5,0) {$\top$};
      \node (C) at (5,0) {$\mathcal C$};
      \draw[->] (I) to[bend left=50] node [above] {$E_\bullet$} node (sh) {} (C);
      \draw[->] (I) -- node [below] {$!$} (top);
      \draw[->] (top) -- node [fill = white] (R) {$E$} (C);
      \doublearrow{-Implies, shorten >=.3cm, shorten <=.4cm}{(top|-sh) -- node [right] {$\iota_\bullet$} (top)};
      \draw[->] (top)  to[bend right=60] node[below] {$R+E$} node (E) {} (C);
      \doublearrow{-Implies}{(R) -- node [right] {$h$} (E)};
      \node (I') at (6,0) {$I$};
      \node (top') at (8.5,0) {$\top$};
      \node (C') at (11,0) {$\mathcal C$};
      \draw[->] (I') to [bend left=50] node [above] {$E_\bullet$} node (sh') {} (C');
      \draw[->] (I') -- node [below] {$!$} (top');
      \draw[->] (top')  to [bend right=60] node[below] {$R+E$} node (R') {} (C');
      \draw[->] (top') -- node [fill = white] (E) {$E$} (C');
      \doublearrow{-Implies, shorten >=.3cm, shorten <=.4cm}{(top'|-sh') -- node [right] {$\iota_{\bullet}$} (top')};
      \doublearrow{-Implies}{(E) -- node [right] {
        } (R')};
      \path (C) -- node {$=$} (I');
    \end{tikzpicture}
    
    \begin{tikzpicture}
      \node (I) at (0,0) {$I$};
      \node (top) at (2.5,0) {$\top$};
      \node (C) at (5,0) {$\mathcal C$};
      \draw[->] (I) to[bend left=60] node [above] {$E_\bullet$} node (sh) {} (C);
      \draw[->] (I) -- node [below] {$!$} (top);
      \draw[->] (top) -- node [fill = white] (R) {$R+E$} (C);
      \doublearrow{-Implies, shorten >=.3cm, shorten <=.4cm}{(top|-sh) -- node [left] {$\iota_\bullet$} (top)};
      \draw[->] (top)  to[bend left=60] node[fill=white] {$E$} node (E'') {} (C);
      \draw[->] (top)  to[bend right=60] node[below] {$E$} node (E) {} (C);
      \doublearrow{-Implies}{(E'') -- node [right] {$h$} (R)};
      \doublearrow{-Implies}{(R) -- node [right] {$\target R$} (E)};
      \node (I') at (7,0) {$I$};
      \node (C') at (11,0) {$\mathcal C$};
      \node (top') at (9,-2) {$\top$};
      \draw[->] (I') to[bend left=45] node [above] {$E_\bullet$} node (sht) {} (C');
      \draw[->] (I') to node [below] (E) {$E_{< \bullet}$} node[above] (tht){} (C');
      \draw[->] (I') to node [below left] {$!$} (top');
      \draw[->] (top') to node [below right] {$E$} (C');
      \doublearrow{-Implies}{(sht) -- node [right] {$h^\tau_\bullet$} (tht-|sht)};
      \doublearrow{-Implies, shorten >= .5cm}{(E.south -| top') -- node [above right] {$\iota_{<\bullet}$} (top')};
      \path (C) -- node {$=$} (I');
    \end{tikzpicture}
\end{definition}
\begin{note}
  Note also the last two axioms imply that for all $i \in \minimum I$,
  $h_i^\tau = id_{E_i}$.
\end{note}

\begin{example}\label{ex:non_terminating_terminating}
  While in terminating relations \emph{every} rewriting step is supposed to be
  decreasing, the definition of terminating graph defined above only requires
  that there exists a decreasing rewriting step of source $x$ whenever $x$ is
  not a normal form. For example, the following graph
  is terminating:
  \[
    \begin{tikzcd}
      a \ar[r, "f_1", bend left]
      \ar[d,"f_3"']
      &
      b
      \ar[l, "f_2", bend left]
      \ar[d, "f_4"]
      \\
      c
      &
      d
    \end{tikzcd}
  \]
  Formally, we take $I = \{0,1\}$, $E_0 = \{c,d\}$ and $E_1 = E =
  \{a,b,c,d\}$. As a result, the set of normal forms is $\minimum E = \{c,d\}$.

  Defining a strategy $(h,h^\tau)$ on $R = \{f_1,f_2,f_3,f_4\}$ amounts to choosing
  for any $x \in E$ that is not a normal form a rewriting step
  $h(x) : x \to h^\tau(x)$. The fact that $h^\tau$ lands in $E_{<i}$ rather than
  $E$ witnesses the fact that $h^\tau(x)$ is in some sens ``smaller'' than $x$. On
  normal forms, axiom \refitem{itm:h_normal_form} forces $h(c) = 1_c$ and
  $h(d) = 1_d$, and by axiom \refitem{itm:target_h}, we necessarily have
  $h^\tau_0(c) = c$ and $h^\tau_0(d) = d$.

  For $i = 1$ by naturality of $h_\bullet^\tau$ we still have
  $h_1^\tau (c) = c$ and $h_1^\tau(d) = d$. Finally we
  set $h(a) = f_3$ and $h(b) = f_4$, which forces $h^\tau(a) = c$ and
  $h^{\tau}(b) = d$.

  We represent the situation by the diagram on the left hand side of the
  following picture. The horizontal bar denotes the filtration of $E$ induced by
  $I$, while the thick red arrows denote the arrows selected by the local
  strategy.
  \[
    \begin{tikzpicture}
      \node (A) at (0,0){$a$};
      \node (B) at (2,0){$b$};
      \node (C) at (0,-2){c};
      \node (D) at (2,-2){$d$};
      \draw[-] (-1,-1) -- (3,-1);
      \draw[->] (A) to[bend left] node[above] {$f_1$} (B);
      \draw[->] (B) to[bend left] node[below] {$f_2$} (A);
      \draw[->, thick, red] (A) --node[near start, left]{$f_3$} (C);
      \draw[->, thick, red] (B) --node[near start, right] {$f_4$} (D);
    \end{tikzpicture}
    \qquad
     \begin{tikzpicture}
      \node (A) at (0,0.5){$a$};
      \node (B) at (2,-0.5){$b$};
      \node (C) at (0,-2){c};
      \node (D) at (2,-2){$d$};
      \draw[-] (-1,0) -- (3,0);
      \draw[-] (-1,-1.5) -- (3,-1.5);
      \draw[->, thick, red] (A) to[bend left] node[above] {$f_1$} (B);
      \draw[->] (B) to[bend left] node[below] {$f_2$} (A);
      \draw[->] (A) --node[left]{$f_3$} (C);
      \draw[->, thick, red] (B) --node[right, near start]{$f_4$} (D);
    \end{tikzpicture}
  \]
  The fact that $h^\tau_i$ lands in $E_{<i}$ is represented by the fact that the
  chosen arrows each go one step lower in the filtration.

  The diagram on the right hand side of the figure above pictures another
  filtration of $E$ (given by $I = \{0<1<2\}$, with $E_0 = \{c,d\}$,
  $E_1 = \{b,c,d\}$ and $E_2 = \{a,b,c,d\}$) and another local strategy, given by
   $h(a) =f_1$ and $h(b) = f_4$.
\end{example}

The following example explains how any terminating relation on a set $E$ induces
both a filtration of $E$, and a terminating graph on $E$.

\begin{example}\label{ex:set_terminates}
  Let $(E, \to)$ be a set equipped with a terminating relation. Then $E$ is
  naturally equipped with a structure of an $\mathbb N$-filtered set as follows:
  \[ E_0 = NF(\to) \qquad E_{i+1} = \{x \in E | \exists y \in E_i, x \xrightarrow{=} y \} \]
  Notice that for all $i \in \mathbb N$, $\mathbb N_{<i+1}$ has a terminal element
  (namely $i$), and so $E_{<i+1}$ is simply $E_i$.

  In addition, $\to$ induces a terminating graph $(R,h,h^\tau)$ on $E$, where:
  \begin{itemize}
  \item $R = \{(x,y) | x \to y\}$ with operations $\source R$ and
    $\target R$ given by the first and second projections respectively.
  \item For $x \in E_0$, i.e. $x$ is a normal form, we set $h^\tau_0 (x) = x$.
  \item For any $i \in \mathbb N$, take $x \in E_{i+¡}$. If $x \in E_i$, then we pose
    $h^\tau_{i+¡}(x) = h^\tau_i(x)$. Otherwise, by definition there exists
    $y \in E_i$ such that $x \xrightarrow{} y$, and we pose
    $h^\tau_{i+1}(x) = y$. 
  \item Finally, $h(x)$ is given by the pair $(x,h^\tau_i(x))$, which does not
    depend on $i$ by definition of $h^\tau_\bullet$.
  \end{itemize}
\end{example}

The next lemma will allow us to easily build filtered objects in categories
other than sets.

\begin{lemma}
  Let $F : \mathcal C \to \mathcal D$ be a cocontinuous functor.  Then for any
  directed set $I$, $F$ induces a functor
  \[ {\Filt I}_F : {\Filt I}_{\mathcal C} \to {\Filt I}_{\mathcal D}.\]

  In addition, for any $I$-filtered object $E$, $F(E)_i = F(E_i)$, $F(E)_{min} = F(E_{min})$
and $F(E)_{<i} = F(E_{<i})$.
\end{lemma}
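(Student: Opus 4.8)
The plan is to define ${\Filt I}_F$ by postcomposing filtrations with $F$, and then to read off the three identities from the single fact that $F$, being cocontinuous, preserves the colimits used to define $\minimum E$ and $E_{<i}$.

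First I would define the functor on objects. Given an $I$-filtered object $E$ with filtration $(E_\bullet, \iota_\bullet)$ --- so $E_\bullet : I \to \mathcal C$ and $\iota_\bullet : E_\bullet \Rightarrow E$ exhibits $E$ as $\colim E_\bullet$ --- I set ${\Filt I}_F(E) := F(E)$, equipped with the candidate filtration $(F \circ E_\bullet,\ F\iota_\bullet)$, where $F\iota_\bullet$ is the whiskering of $\iota_\bullet$ by $F$. The one thing to check is that this really is an $I$-filtration of $F(E)$, i.e. that $F\iota_\bullet : F \circ E_\bullet \Rightarrow F(E)$ exhibits $F(E)$ as $\colim (F \circ E_\bullet)$. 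This is exactly the content of cocontinuity of $F$: it carries the colimit cocone $\iota_\bullet$ to a colimit cocone. On morphisms, a map of $I$-filtered objects $\eta : E \to E'$ is a natural transformation $\eta : E_\bullet \Rightarrow E'_\bullet$, and I would send it to the whiskered transformation $F\eta : F \circ E_\bullet \Rightarrow F \circ E'_\bullet$, which is by definition a morphism ${\Filt I}_F(E) \to {\Filt I}_F(E')$ in ${\Filt I}_{\mathcal D}$. Functoriality --- preservation of identities and of vertical composition --- is immediate from $(F\eta)_i = F(\eta_i)$ together with functoriality of $F$ on each component.

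It then remains to verify the three identities. The first, $F(E)_i = F(E_i)$, is purely definitional: by construction $F(E)_\bullet = F \circ E_\bullet$, so $F(E)_i = F(E_\bullet(i)) = F(E_i)$. For the second, I would invoke the preceding note that $\minimum I$ is discrete, so $\minimum E = \coprod_{i \in \minimum I} E_i$; since $F$ is cocontinuous it preserves this coproduct, whence $F(\minimum E) = \coprod_{i \in \minimum I} F(E_i) = \coprod_{i \in \minimum I} F(E)_i = \minimum{F(E)}$. For the third, for $i \notin \minimum I$ the object $E_{<i}$ is by definition the colimit of $E_\bullet$ restricted to $I_{<i}$, and cocontinuity of $F$ gives $F(E_{<i}) = \colim (F \circ E_\bullet|_{I_{<i}}) = F(E)_{<i}$; for $i \in \minimum I$ both sides collapse to $F(E_i) = F(E)_i$ under the convention $E_{<i} = E_i$.

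All of these computations are routine once cocontinuity is in hand, so I do not expect a genuine obstacle; the only load-bearing step is the very first one --- that $F$ sends the colimit cocone exhibiting $E$ to one exhibiting $F(E)$ --- which is precisely why cocontinuity, and not mere functoriality, is required. The one bookkeeping caveat I would flag is that these ``equalities'' of colimits are really canonical isomorphisms, identified along the comparison maps induced by $F$, in keeping with the paper's convention of writing $=$ for colimits.
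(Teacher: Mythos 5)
Your proposal is correct and follows essentially the same route as the paper: define ${\Filt I}_F$ by postcomposing the filtration with $F$, use cocontinuity to see that $F\iota_\bullet$ is again a colimit cocone, and treat morphisms by whiskering. The paper's own proof is just a terser version of this, leaving the three identities (which you verify carefully, including the coproduct description of $\minimum E$ and the convention for minimal $i$) as immediate consequences.
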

\begin{proof}
  Let $E$ be an $I$-filtered object in $\mathcal C$.  Since
  $F$ preserves colimits, $(F(E),I,F \circ E_\bullet,F\circ \iota_\bullet)$ is a filtered object in
  $\mathcal D$. The image of maps by $F$ is as straightforward.
\end{proof}

\begin{example}
  Let $X$ be a set equipped with a terminating order $\leq$, and let $\to$ be an
  algebraic relation on $kX$ which is terminating with respect to $\leq$.

  To exhibit a filtration on $kX$, we first define an map
  $\height : X \to \mathbb N$, by induction on $x \in X$, using the order $\leq$. Without
  loss of generality, we can suppose that $NF(\to)$ is equal to the subspace of
  $kX$ spanned by the $x \in X$ which are minimal for $\leq$ (otherwise, replace
  $\leq$ by the relation $x \prec y$ defined by $x < y$ and $y\notin NF(\to)$).
  \begin{itemize}
  \item If $x$ is minimal for $\leq$, i.e. $x$ is a normal from, we define $\height(x) = 0$.
  \item Otherwise, we define:
    \[
      \height(x) := \min_{\substack{u \in kX \\ x \to u}} \left\{ \max_{y \in \supp(u)} \height (y) \right\} +1
    \]
  \end{itemize}

  We finally define a $\mathbb N$-filtration on $X$ by setting
  $X_i = \{x \in X | \height(x) \leq i\}$.  By the previous lemma, this filtration
  can be transported to $kX$. More precisely, a linear combination $u \in kX$ lies
  in $kX_i$ if and only if $\supp(u) \subseteq X_i$.

  Then $\to$ induces a reflexive graph $R$, which is the subspace of
  $kX \times kX$ generated by the pairs $(x,u)$ such that $x \xrightarrow{=} u$. To
  show that this graph is terminating, we define $h^\tau$ and $h$ as follows:
  \begin{itemize}
  \item Let $x \in X_0$, that is $x$ is a normal form. We then define
    $h^\tau(x) = x$, and extend it to $kX_0$ by linearity.
  \item For any $i \in I$, let $x \in X_{i+1}$. If $x \in X_i$, then we simply define
    $h^ \tau_{i+1}(x) = h_i^\tau(x)$. Otherwise, by definition there exists
    $u \in kX_i$ such that $x \to u$, and we define $h_{i+1}^\tau(x) = u$. We finally
    extend $h_{i+1}^\tau$ to $kX_{i+1}$ by linearity.
  \item For any $x \in X$, we finally define $h(x) = (x,h^\tau(x))$, and extend it to
    $kX$ by linearity.
  \end{itemize}
\end{example}

\begin{remark}
  Let $(R,h_\bullet,h_\bullet^\tau)$ be a terminating graph on an a filtered object
  $E$, and $S$ be an other graph over $E$. For any arrow $f : R \to S$,
  $(S,f \circ h_\bullet,h^\tau_\bullet)$ is a terminating graph over $E$. In particular, the
  various closures of $R$ defined in Section \ref{sec:graphs_as_relations} all
  inherit a canonical structure of terminating graph over $E$, which we still
  denote $h_\bullet$ and $h_\bullet^\tau$.
\end{remark}

\section{Confluence and strategies}\label{sec:confluence}
In this section, we define the notion of confluence and local confluence of a
graph.  Our criterion for confluence is that a for a confluent and terminating
graph, the quotient by the graph should be isomorphic to the object of normal
forms. We show this property, and prove that local confluence together with
termination imply confluence: this is Newman's Lemma.

\begin{definition}
  Let $\mathcal C$ be any category. A split coequalizer in $\mathcal C$ is a
  diagram
   \[
    \begin{tikzcd}[column sep = large]
      A \ar[r, "f", yshift=.1cm] \ar[r, ,yshift=-.1cm, "g"']
      &
      B \ar[r, "e"]
      \ar[l,bend right = 50, "t"']
      &
      C
      \ar[l, bend right = 50, "s"']
    \end{tikzcd}
  \]
  such that:
  \[
    e \circ f = e \circ g \qquad e \circ s = id_C \qquad s \circ e = g \circ t \qquad f \circ t = id_B
  \]
  For any such diagram, $e$ is necessary a coequalizer of $f$ and $g$, which we
  denote by $B/A = C$. In addition, such a coequaliser is absolute, meaning that
  it is preserved by any functor.
\end{definition}

\begin{definition}
  Let $R$ be a graph on a filtered object $E$. A \emph{(global) strategy} for $R$ is the
  data of a pair of morphisms $H : E \to R$ and $H^\tau : E \to \minimum E$ such that
  the following equations hold:
  \[
    \source R \circ H = id_E \qquad
    \target R \circ H = \minimum \iota \circ H^{\tau} \qquad
    H^{\tau} \circ \minimum \iota = Id_{\minimum E}
  \]

  A strategy $(H,H^\tau)$ is said to be \emph{confluent} if
  $H^\tau \circ \source R = H^\tau \circ \target R$
\end{definition}

The following proposition immediately follows from the definition of confluent
strategy.
\begin{proposition}\label{prop:conf_implies_split_coeq}
  Let $R$ be a graph over a filtered object $E$. If there exists a confluent
  strategy $(H,H^\tau)$ for $R$ then the diagram

   \[
    \begin{tikzcd}[column sep = large]
      R \ar[r, "\source R", yshift=.1cm] \ar[r, ,yshift=-.1cm, "\target R"']
      &
      E \ar[r, "H^\tau"]
      \ar[l,bend right = 50, "H"']
      \ar[r, bend left = 50, hookleftarrow, "\minimum \iota"]
      &
      \minimum E
    \end{tikzcd}
  \]
 forms a split coequalizer. In particular, $E / R = \minimum E$.
\end{proposition}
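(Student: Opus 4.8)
The plan is to recognize the displayed diagram as a direct instance of the split coequalizer diagram from the preceding definition. Explicitly, I would set $A = R$, $B = E$ and $C = \minimum E$, take the parallel pair to be $f = \source R$ and $g = \target R$, the coequalizing arrow to be $e = H^\tau$, and the two sections to be $t = H : E \to R$ and $s = \minimum \iota : \minimum E \to E$. With this dictionary fixed, the proposition reduces to verifying the four defining equations of a split coequalizer, and each of them is precisely one of the hypotheses packaged into the notion of (confluent) strategy.

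Carrying this out, the coequalizer identity $e \circ f = e \circ g$ becomes $H^\tau \circ \source R = H^\tau \circ \target R$, which is exactly the confluence condition; this is the unique equation for which confluence, rather than the bare strategy axioms, is needed. The remaining three identities translate verbatim into the three strategy equations: $f \circ t = id_B$ is $\source R \circ H = id_E$, the section identity $e \circ s = id_C$ is $H^\tau \circ \minimum \iota = Id_{\minimum E}$, and the compatibility $s \circ e = g \circ t$ is $\minimum \iota \circ H^\tau = \target R \circ H$. No computation is involved beyond matching symbols; the only point demanding a little care is tracking the orientation of the two sections and confirming that the backward inclusion arrow $\minimum \iota$ indeed plays the role of $s$.

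For the final clause, I would invoke the fact recorded in the definition of split coequalizer that in any such diagram $e$ is automatically a coequalizer of $f$ and $g$. Hence $H^\tau$ exhibits $\minimum E$ as a coequalizer of $\source R, \target R : R \to E$, and since $E/R$ is defined in Lemma \ref{lem:coeq_of_closures} to be precisely this coequalizer, uniqueness of coequalizers up to canonical isomorphism yields $E/R \cong \minimum E$. I expect no genuine obstacle here: the entire content resides in the bookkeeping of the definitions, which is exactly why the statement follows immediately.
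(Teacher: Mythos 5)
Your proposal is correct and coincides with the paper's own argument: the paper offers no separate proof precisely because, as it says, the proposition ``immediately follows from the definition of confluent strategy,'' and your dictionary $f = \source R$, $g = \target R$, $e = H^\tau$, $t = H$, $s = \minimum\iota$ matches the four split-coequalizer equations to the three strategy equations plus confluence exactly as intended. The concluding identification $E/R \cong \minimum E$ via the absoluteness/coequalizer clause of the split-coequalizer definition and the definition of $E/R$ in Lemma~\ref{lem:coeq_of_closures} is likewise the intended reading.
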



\begin{theorem}
  Let ($R, h_\bullet, h^\tau_\bullet)$ be a terminating graph on an $I$-filtered object
  $E$.  

  Then there is a strategy $(H,H^\tau)$ for $R^\sym$, which satisfy the additional
  equations:
  \[
    H = \mu_{R} \circ \langle h , H \circ \target{R+E} \circ h \rangle \qquad H^\tau = H^\tau \circ \target{R+E} \circ h
  \]
  Where $\mu_R$ denotes the canonical map $R R^\sym \to R^\sym$.
\end{theorem}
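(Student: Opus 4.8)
The plan is to construct the pair $(H,H^\tau)$ by well-founded recursion along the filtration, which is legitimate precisely because $I$ is a terminating order (axiom TG1). Concretely I would build a cocone $H_\bullet : E_\bullet \Rightarrow R^\sym$ and a cocone $H^\tau_\bullet : E_\bullet \Rightarrow \minimum E$ (viewing $R^\sym$ and $\minimum E$ as constant functors), and then define $H$ and $H^\tau$ as the induced maps out of the colimit $E$. For a minimal index $i \in \minimum I$ the elements of $E_i$ are normal forms, so I set $H_i := u_{R^\sym} \circ \iota_i$ (the reflexive identity arrow) and $H^\tau_i := \kappa_i$, the canonical coproduct inclusion $E_i \hookrightarrow \minimum E = \coprod_{j \in \minimum I} E_j$. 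For a non-minimal $i$, the maps $H_{<i} : E_{<i} \to R^\sym$ and $H^\tau_{<i} : E_{<i} \to \minimum E$ induced on the colimit $E_{<i} = \colim_{j<i} E_j$ are already available by the induction hypothesis, and I set
\[
  H_i := \mu_R \circ \langle h \circ \iota_i,\ H_{<i} \circ h^\tau_i \rangle, \qquad H^\tau_i := H^\tau_{<i} \circ h^\tau_i.
\]
Intuitively $H_i$ prepends the single rewriting step $h$ chosen at $x \in E_i$ to the path already built from its strictly lower target $\target{R+E} \circ h(x)$, while $H^\tau_i$ just reads off the normal form reached from that target.

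Before this makes sense I must check the pairing lands in the graph product $(R+E)\,R^\sym$, i.e.\ that the target of the first factor agrees with the source of the second. By axiom \refitem{itm:target_h} we have $\target{R+E} \circ h \circ \iota_i = \iota_{<i} \circ h^\tau_i$, while the induction hypothesis gives $\source{R^\sym} \circ H_{<i} = \iota_{<i}$, whence $\source{R^\sym} \circ H_{<i} \circ h^\tau_i = \iota_{<i} \circ h^\tau_i$; the two coincide. Here $\mu_R$ is read, as in the statement, as the evident extension of the multiplication to a map $(R+E)\,R^\sym \to R^\sym$, obtained from the canonical map $R+E \to R^\sym$ (coming from the universal property of $R+E$ and reflexivity of $R^\sym$) composed with the transitivity map $R^\sym R^\sym \to R^\sym$; on the $E$-summand of $R+E$ it is just the reflexive structure.

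The main obstacle I expect is verifying that the families $(H_i)$ and $(H^\tau_i)$ are genuinely \emph{natural} in $i$, i.e.\ that they are cocones, for this is exactly what licenses passing the recursion to a map out of the colimit. I would prove naturality by induction on $i$ using naturality of $h^\tau_\bullet$ and of $\iota_\bullet$: writing $E_{j\to i}$ for the image of an arrow $j \le i$ under $E_\bullet$, naturality of $h^\tau_\bullet$ gives $h^\tau_i \circ E_{j\to i} = E_{<(j\to i)} \circ h^\tau_j$, and combined with the inductive compatibility $H^\tau_{<i} \circ E_{<(j\to i)} = H^\tau_{<j}$ this yields $H^\tau_i \circ E_{j\to i} = H^\tau_j$, and symmetrically $H_i \circ E_{j\to i} = H_j$ (using in addition the cocone identity $\iota_i \circ E_{j\to i} = \iota_j$ and TG3 at minimal indices). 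The delicate point is that at a non-minimal $i$ the object $E_{<i}$ is a genuine colimit rather than a single predecessor, so $H_{<i}$ and $H^\tau_{<i}$ exist only through the universal property; directedness of $I$ is what makes these colimit-induced maps well behaved and compatible. Passing to the colimit then produces $H : E \to R^\sym$ and $H^\tau : E \to \minimum E$.

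Finally I would check the three strategy equations on each $E_i$ by induction. Since $\mu_R$ is a morphism of graphs, $\source{R^\sym} \circ \mu_R$ and $\target{R^\sym} \circ \mu_R$ are the source and target of $(R+E)\,R^\sym$; thus $\source{R^\sym} \circ H_i = \source{R+E} \circ h \circ \iota_i = \iota_i$ by \refitem{itm:source_h}, giving $\source{R^\sym} \circ H = id_E$, while $\target{R^\sym} \circ H_i = \target{R^\sym} \circ H_{<i} \circ h^\tau_i = \minimum\iota \circ H^\tau_{<i} \circ h^\tau_i = \minimum\iota \circ H^\tau_i$ by the induction hypothesis, giving $\target{R^\sym} \circ H = \minimum\iota \circ H^\tau$; the equation $H^\tau \circ \minimum\iota = Id_{\minimum E}$ reduces componentwise to the base case $H^\tau_j = \kappa_j$. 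The two additional equations are then simply the recursive definitions read globally: on a non-minimal $E_i$, \refitem{itm:target_h} gives $\target{R+E} \circ h \circ \iota_i = \iota_{<i} \circ h^\tau_i$, so $H \circ \target{R+E} \circ h \circ \iota_i = H_{<i} \circ h^\tau_i$ and $H^\tau \circ \target{R+E} \circ h \circ \iota_i = H^\tau_{<i} \circ h^\tau_i = H^\tau_i$, matching the definitions of $H_i$ and $H^\tau_i$; on minimal indices both sides collapse by \refitem{itm:h_normal_form}. This establishes $H = \mu_R \circ \langle h,\ H \circ \target{R+E} \circ h \rangle$ and $H^\tau = H^\tau \circ \target{R+E} \circ h$.
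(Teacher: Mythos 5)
Your proposal is correct and follows essentially the same route as the paper's proof: induction on $i\in I$ (licensed by termination of $I$), with $H_i := u_{R^\sym}\circ\iota_i$, $H^\tau_i$ the inclusion into $\minimum E$ at minimal indices, and $H_i := \mu_R\circ\langle h\circ\iota_i,\ H_{<i}\circ h^\tau_i\rangle$, $H^\tau_i := H^\tau_{<i}\circ h^\tau_i$ at non-minimal ones, using \refitem{itm:target_h} to see that the pairing lands in the graph product, then passing to the colimit. Your additional care about naturality of the cocones and about reading $\mu_R$ as a map out of $(R+E)\,R^\sym$ only spells out details the paper leaves as ``straightforward verification.''
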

\begin{proof}
  Since $E$ is the colimit of the functor $E_\bullet$, it is enough to define natural
  transformations $H_\bullet :E_\bullet \Rightarrow R^\sym$ and
  $H^\tau_\bullet : E_\bullet \Rightarrow \minimum E$ satisfying the following equations: 
  \[
    \source {R^\sym} \circ H_\bullet = \iota_\bullet
    \qquad
    \target {R^\sym} \circ H_\bullet = \minimum \iota \circ H_\bullet^{\tau}
    \qquad
    H^{\tau}_i = \iota_i \text{, where $i \in \minimum I$.} 
  \]
  \[
    H_\bullet = \mu_{R} \circ \langle h \circ \iota_\bullet, H \circ \target{R+E} \circ h \circ \iota_\bullet\rangle \qquad H^\tau_\bullet = H^\tau \circ \target{R+E} \circ h \circ \iota_\bullet
  \]
  Note that using the equations relating $h$ to $h^\tau$, the last two relations can be rewritten as: 
  \[
    H_\bullet = \mu_{R} \circ \langle h \circ \iota_\bullet, H_{<\bullet} \circ h_\bullet^\tau \rangle \qquad H^\tau_\bullet = H^\tau_{<\bullet} \circ h_\bullet^\tau
  \]

  We proceed by induction on $i \in I$ to build such natural transformations. If
  $i$ is minimal, then the third equation entirely determines $H^\tau_i$, and we
  take $H_i := u_{R^\sym} \circ \iota_i$.  Then the first equation  holds because
  $\source {R^\sym} \circ H_i = \source {R^\sym} \circ u_{R^\sym} \circ \iota_i = \iota_i$, while for the second we have
  $\target {R^\sym} \circ H_i = \iota_i = \minimum \iota \circ \iota_i$, where the second equality holds
  because $i$ is minimal.

  Suppose now that $H_\bullet$ and $H^\tau_\bullet$ are defined for all $j < i$, naturally in
  $j$. Then they induce functors $H_{<i} : E_{<i} \to R^\sym$ and
  $H^\tau_{<i} : E_{<i} \to \minimum E$. By induction hypothesis those satisfy the
  following equations:
  \[
  \source {R^\sym} \circ H_{<i} = \iota_{<i}
    \qquad
    \target {R^\sym} \circ H_{<i} = \minimum \iota \circ H_{<i}^{\tau}
    \qquad
    H_{<i}^\tau \circ \iota_j = \iota_j \text{, where $j \in \minimum I$.} 
  \]

  Before defining $H_i$ let us consider the two maps $h_i$ and
  $H_{<i} \circ h_i^\tau$, from $E_i$ to respectively $R$ and $R^\sym$.  Notice that we have
  $\target R \circ h_i = \iota_{<i} \circ h^\tau_i = \source {R^\sym} \circ H_{<i} \circ h^\tau_i$, and so these
  two maps induce a map $\langle h_i , H_{<i} \circ h_i^\tau \rangle : E_i \to RR^\sym$. We finally define:
  \[
    H_i := m_R \circ \langle h_i , H_{<i} \circ h_i^\tau \rangle \qquad H^\tau_i := H^\tau_{<i} \circ h^\tau_i.
  \]

  The fact that $H_i$ and $H_i^\tau$ satisfy the required equations in all cases is then a straightforward verification.

\end{proof}




\section{From local to global confluence}\label{sec:loc_to_glob_confluence}
The goal of this section is to give a local confluence criteria in order to
prove confluence. In this section, we suppose that $\mathcal C$ is a locally
finitely presentable category.

\begin{definition}
  Let $(R,h,h^\tau)$ be a terminating graph over a filtered object $E$. A
  \emph{local-confluence structure} (or \emph{lc-structure} for short) on
  $(R,h,h^\tau)$ is the structure of a $J$-filtered object on $R$, where $J$ is the
  category associated to a total terminating order such that
  $\minimum R = \emptyset$, and a natural transformation
  $c_\bullet : R_\bullet \Rightarrow R_{<\bullet}^\sym$ satisfying the equations:
  \[
    \source {R^\sym} \circ \iota^\sym_{<\bullet} \circ c_\bullet = \target{R+E} \circ h \circ \source {R_\bullet} 
    \qquad
    \target {R^\sym} \circ \iota^\sym_{<\bullet} \circ c_\bullet = \target {R_\bullet} 
\]
\end{definition}

The reason for the restriction to the case where $\mathcal C$ is locally
finitely presentable, and $J$ is total is the following lemma, that we
implicitly use in the proof of Theorem \ref{thm:Newman_general}.

\begin{lemma}
  Let $\mathcal C$ be a locally finitely presented category, and let $R$ be a
  graph on a filtered object $E$. Suppose that $R$ is $J$-filtered. Then
  \begin{itemize}
  \item The functor $(R_\bullet)^* : J \to \mathcal C$ is a $J$-filtration of $R^*$.
  \item If $J$ is total, then for any $j \in J$, $(R_{<j})^\sym = (R^{*})_{<j}$.
  \end{itemize}
In addition, the same properties hold for $R + R^\circ$, and thus for $R^\sym$.
\end{lemma}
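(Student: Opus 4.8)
The proof rests on two facts about the ambient category: since $\mathcal C$ is locally finitely presentable, filtered colimits commute with finite limits in $\mathcal C$; and, as always, colimits commute with colimits. The plan is to reduce every claim to these two principles using the explicit description $R^* = E \coprod \coprod_{i \geq 1} R^{\times_E i}$, where $R^{\times_E i}$ is the $i$-fold fibre product of $R$ over $E$.

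For the first item, I would view each $R_j$ as a graph over $E$ through $\source R \circ \iota_j$ and $\target R \circ \iota_j$ and apply the functor $(-)^*$ levelwise, so that $(R_\bullet)^* : J \to \mathcal C$ is the functor $j \mapsto (R_j)^* = E \coprod \coprod_{i \geq 1} (R_j)^{\times_E i}$. Each $(-)^{\times_E i}$ is a finite limit in which the base $E$ does not vary with $j$, and $R = \colim_j R_j$ is a filtered colimit because $J$ is directed; hence the commutation of filtered colimits with finite limits gives $\colim_j (R_j)^{\times_E i} = R^{\times_E i}$ for every $i$. Interchanging the filtered colimit with the countable coproduct (Fubini for colimits) and using that $J$ is connected, so $\colim_j E = E$, I would conclude $\colim_j (R_j)^* = E \coprod \coprod_{i \geq 1} R^{\times_E i} = R^*$, the comparison map being the canonical one; this exhibits $(R_\bullet)^*$ as a $J$-filtration of $R^*$.

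For the second item I would exploit that when $J$ is total, each truncation $J_{<j}$ is totally ordered, hence directed whenever non-empty. If $j$ is minimal the identity holds by the conventions $R_{<j} = R_j$ and $(R^*)_{<j} = (R^*)_j$. Otherwise $R_{<j} = \colim_{j' < j} R_{j'}$ is a filtered colimit, and running the argument of the first item over the directed set $J_{<j}$ yields $(R_{<j})^* = \colim_{j' < j} (R_{j'})^* = (R^*)_{<j}$, the last equality being the definition of $(R^*)_{<j}$ relative to the filtration $(R_\bullet)^*$ built above. This is precisely where totality is indispensable: for a merely directed $J$ the set $J_{<j}$ can fail to be directed, and the interchange of colimit with finite limit would no longer apply.

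The addendum follows by the same route. Reversing source and target levelwise makes $(R_\bullet)^\circ$ a $J$-filtration of $R^\circ$ with the same underlying colimit, and since coproducts commute with filtered colimits, $R_\bullet + (R_\bullet)^\circ$ is a $J$-filtration of $R + R^\circ$ for which the totality argument transfers verbatim; applying the now-established $(-)^*$ statement to the filtered graph $R + R^\circ$ settles the case $R^\sym = (R + R^\circ)^*$. The one point requiring care throughout is that the levelwise isomorphisms assemble into genuine filtrations rather than mere pointwise identifications, but this is automatic: every comparison map is induced by the universal properties of the (co)limits in play, so coherence is free.
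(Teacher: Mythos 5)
Your proposal is correct and follows essentially the same route as the paper: both reduce everything to the decomposition $R^* = E + \coprod_{i \geq 1} R^{\times_E i}$, commute the coproduct with the $J$-colimit, use the locally finitely presentable hypothesis to commute the directed colimit with the finite fibre products, invoke totality of $J$ only to ensure $J_{<j}$ is still directed, and dispatch $R + R^\circ$ and $R^\sym$ by colimits commuting with colimits. The only (inessential) difference is bookkeeping: the paper performs the interchange one variable at a time, via finality of the diagonal $J \to J \times J$ and the cited fact that pullbacks preserve directed colimits, whereas you appeal directly to the commutation of finite limits with filtered colimits in a locally finitely presentable category.
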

\begin{proof}
  Since colimits commute with colimits, we just have to prove that
  $\colim_j R_j^{\times_E n} = R^{\times_E n}$.  For $n = 0,1$ this is clear. Let us treat
  the case $ n = 2$, the general case being similar. Note first that since $J$
  is directed the inclusion $J \to J \times J$ is final, and so
  $ \colim_j R_j \times_E R_j = \colim_{i,j} R_i \times_E R_j $. Finally in a locally
  finitely presentable category pullbacks preserve directed colimits
  \cite[Proposition 1.59]{Ada_Rosick}, and so we get:
  \[
    \colim_j R_j \times_E R_j =  \colim_i (\colim_j (R_i \times_E R_j)) = \colim_i (R_i \times_E R) = R \times_E R
  \]

  To compute $(R_{<j})^*$ we use the same technique, using the face that since
  $J$ is total then $I_{<j}$ is still directed.

  The case of $R + R^\circ$ is clear since colimits commute with colimits, and the
  case of $R^\sym$ follows by combining the two previous cases.
\end{proof}

\begin{theorem}[Newman's Lemma for graphs]\label{thm:Newman_general}
  Let $(R,h,h^\tau)$ be a terminating graph over a filtered object $E$, equipped
  with an lc-structure. Then the strategy induced by $(R,h,h^\tau)$ on $R^\sym$ is
  confluent.

  In particular, $E/R = E/R^\sym  = \minimum E$.
\end{theorem}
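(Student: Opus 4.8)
The plan is to reduce the confluence identity for the induced strategy on $R^\sym$, namely $H^\tau \circ \source{R^\sym} = H^\tau \circ \target{R^\sym}$, to the same identity on the \emph{generators} $R$, i.e. $H^\tau \circ \source R = H^\tau \circ \target R$, and then to establish the latter by well-founded induction along the lc-filtration $J$. The reduction rests on the universal property of $R^\sym$ (Lemma~\ref{lem:sanity_check}), while the inductive step is where the lc-structure does its work.

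First I would carry out the reduction. Let $K$ be the kernel pair of $H^\tau : E \to \minimum E$, with projections $p_1,p_2 : K \to E$ and inclusion $\langle p_1,p_2\rangle : K \hookrightarrow E \times E$; being a kernel pair, $K$ is canonically a reflexive, symmetric and transitive graph over $E$ (this uses only that $\mathcal C$ is finitely complete), and $H^\tau \circ p_1 = H^\tau \circ p_2$ by construction. For any graph $S$ over $E$, a morphism of graphs $S \to K$ over $E$ is exactly a factorisation of $\langle \source S, \target S\rangle$ through $K$, that is, exactly the datum witnessing $H^\tau \circ \source S = H^\tau \circ \target S$. Hence, once we know $H^\tau \circ \source R = H^\tau \circ \target R$, we get a graph morphism $R \to K$ over $E$, which by Lemma~\ref{lem:sanity_check} extends uniquely to a morphism $R^\sym \to K$ over $E$ (since $K$ is reflexive, symmetric and transitive); composing with $p_1, p_2$ and using $H^\tau \circ p_1 = H^\tau \circ p_2$ yields $H^\tau \circ \source{R^\sym} = H^\tau \circ \target{R^\sym}$. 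Applying the same argument to the subgraph $R_{<j}$ shows that confluence on the generators $R_{<j}$ implies confluence on $(R_{<j})^\sym$, a step I will reuse inside the induction; here I rely on the preceding lemma identifying $(R_{<j})^\sym$ with $(R^\sym)_{<j}$ and exhibiting it as the reflexive-symmetric-transitive closure of $R_{<j}$.

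It then remains to prove $H^\tau \circ \source R = H^\tau \circ \target R$. Since $R = \colim_{j \in J} R_j$, it suffices to prove $H^\tau \circ \source{R_j} = H^\tau \circ \target{R_j}$ for every $j$, which I do by well-founded induction on the terminating order $J$. The base case is vacuous: the least element $j_0$ satisfies $R_{j_0} = \minimum R = \emptyset$, so the two maps out of $\emptyset$ agree. For the inductive step I fix $j$, assume confluence on $R_{j'}$ for all $j' < j$, pass to the colimit $R_{<j} = \colim_{j' < j} R_{j'}$, and apply the reduction above at this truncated level to obtain $H^\tau \circ \source{R^\sym} \circ \iota^\sym_{<j} = H^\tau \circ \target{R^\sym} \circ \iota^\sym_{<j}$. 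I then chain the two lc-equations with the identity $H^\tau = H^\tau \circ \target{R+E} \circ h$ produced by the construction of the strategy:
\[
  H^\tau \circ \source{R_j}
  = H^\tau \circ \target{R+E} \circ h \circ \source{R_j}
  = H^\tau \circ \source{R^\sym} \circ \iota^\sym_{<j} \circ c_j
  = H^\tau \circ \target{R^\sym} \circ \iota^\sym_{<j} \circ c_j
  = H^\tau \circ \target{R_j},
\]
where the first equality is the strategy identity, the second the source lc-equation, the third the inductive confluence on $(R_{<j})^\sym$, and the last the target lc-equation. This closes the induction.

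Assembling the pieces, the induced strategy on $R^\sym$ is confluent, so Proposition~\ref{prop:conf_implies_split_coeq} produces a split coequaliser and $E/R^\sym = \minimum E$; combined with the canonical isomorphism $E/R = E/R^\sym$ of Lemma~\ref{lem:coeq_of_closures}, this gives $E/R = E/R^\sym = \minimum E$. I expect the hard part to be not the final bookkeeping but the inductive step, and precisely the legitimacy of invoking the inductive hypothesis in its \emph{closed-up} form on $(R_{<j})^\sym$: this is exactly where totality of $J$, the identification $(R_{<j})^\sym = (R^\sym)_{<j}$, and local finite presentability of $\mathcal C$ (so that pullbacks commute with the relevant directed colimits) are all needed to make the colimit manipulations valid.
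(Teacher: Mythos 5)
Your proof is correct, and while its inductive skeleton over $J$ and its central computation (the four-step chain $H^\tau \circ \source{R_j} = H^\tau \circ \target{R+E} \circ h \circ \source{R_j} = H^\tau \circ \source{R^\sym} \circ \iota^\sym_{<j} \circ c_j = H^\tau \circ \target{R^\sym} \circ \iota^\sym_{<j} \circ c_j = H^\tau \circ \target{R_j}$) coincide exactly with the paper's, you handle the transfer of confluence from generators to the closure $R^\sym$ by a genuinely different mechanism. The paper does this transfer by hand inside the induction: it first deduces the identity for $R_j^\circ$ by exchanging source and target, then for $T^n = (R_j + R_j^\circ)^{\times_E n}$ by an induction on $n$ that alternates $\source T$ and $\target T$ through the pullback projections, and finally passes to the colimit over $j$, which requires $R^\sym = \colim_j R_j^\sym$ and hence the preceding lemma (totality of $J$, local finite presentability). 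You instead observe that the kernel pair $K$ of $H^\tau$ is an internal equivalence relation, i.e.\ a reflexive, symmetric, transitive graph over $E$ (this only needs finite limits), that a graph morphism $S \to K$ over $E$ is precisely a witness of $H^\tau \circ \source S = H^\tau \circ \target S$, and that Lemma \ref{lem:sanity_check} then extends any such witness from $S$ to $S^\sym$ in one shot. This is cleaner and more conceptual, and it buys slightly more than you claim: your final colimit step only uses $R = \colim_j R_j$, which is definitional, rather than $R^\sym = \colim_j R_j^\sym$, which is what actually forces the paper to invoke the lfp lemma. So your closing worry is somewhat misplaced — in your route, totality of $J$ and local finite presentability are needed only to interpret $\iota^\sym_{<j}$ as the truncation inclusion of a filtration of $R^\sym$; if one instead reads $\iota^\sym_{<j} : (R_{<j})^\sym \to R^\sym$ as the map induced by functoriality of the closure, your argument goes through with no appeal to local finite presentability at all, whereas the paper's explicit chaining-plus-colimit argument cannot dispense with it. The paper's approach, in exchange, is more elementary, making the path-concatenation structure of $R^\sym$ visible rather than hiding it in the universal property.
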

\begin{proof}
 Let us prove by induction on $j \in J$ that the equality
  $H^\tau \circ \source {R_j^\sym}  = H^\tau \circ \target {R_j^\sym}$ holds. 

  Suppose first that $j$ minimal. Since
  $R_{min} = \coprod_{j \in \minimum J} R_j = \emptyset$, then
  $R_j = \emptyset$. The required equation thus holds by unicity of the maps from the
  initial object.
  
  Otherwise, by induction we have
  $H^\tau \circ \source {R_{<j}^\sym} = H^\tau \circ \target {R_{<j}^\sym}$. Then we get
  $H^\tau \circ \source {R_j} = H^\tau \circ \target {R_j}$. Indeed we have:
  \begin{align*}
    H^\tau \circ  \source {R_j} 
    & = H^\tau \circ \target{R+E} \circ h \circ  \source {R_j}
    \\ & = H^\tau \circ  \source {R^\sym} \circ \iota^\sym_{<j} \circ c_j
    \\ & = H^\tau \circ \target {R_j^\sym} \circ \iota_{<j} \circ c_j
    \\ & = H^\tau \circ \target {R_j}
  \end{align*}
  By exchanging the roles of $\source{R_j}$ and $\target{R_j}$, the same
  equality holds for $R_j^\circ$. Let us denote by $T$ the sum $R_j + R_j^\tau$ and by
  $T^n$ the limit $T^{\times_E n}$, and let us show that for all
  $n \in \mathbb N$,
  $H^\tau \circ \source{T^{n}} = H^\tau \circ \target{T^{n}}$. For $n = 1$ this is the
  previous discussion. For $n = 0$ $\source{T^0}$ and $\target{T^0}$ coincide
  and the equality holds. Finally for $n \geq 2$ we have, using repeatedly  $n = 1$:
  \[
    H^\tau \circ \source{T^n} = H^\tau \circ \source{T} \circ \pi_1 = H^\tau \circ \target{T} \circ \pi_1 = H^\tau \circ
    \source{T} \circ \pi_2 = \ldots = H^\tau \circ \target{T} \circ \pi_n = H^\tau \circ \target{T^n}.
  \]
  
  Putting all those equalities together we finally get
  $H^\tau \circ \source{R^\sym_j} = H^\tau \circ \target{R^\sym_j}$, and by colimit
  $H^\tau \circ \source{R^\sym} = H^\tau \circ \target{R^\sym}$. Hence $H$ is indeed a
  confluent strategy on $R^\sym$. The equality $E/R^\sym = \minimum E$ follows
  by Proposition \ref{prop:conf_implies_split_coeq}, and $E/R = E/R^\sym$ by Lemma \ref{lem:coeq_of_closures}.
\end{proof}

\begin{example}
  Let $(E,\to)$ be a set equipped with a terminating relation. Following Example
  \ref{ex:set_terminates}, we can equip $E$ with an $\mathbb N$-filtered structure, and $\to$ induces a
  terminating graph $R = \{(x,y)|x \to y\}$ on $E$. Then there is a canonical
  filtration of $R$ given by the following pullback:
  \[
    \begin{tikzcd}
      R_j
      \ar[r]
      \ar[d]
      &
      E_j
      \ar[d,"\iota_j"]
      \\
      R
      \ar[r,"\source R"]
      & E.
    \end{tikzcd}
  \]
  Since $\Set$ is locally finitely presented, pullbacks preserve directed
  colimits and thus $\colim_j R_j = R$. More precisely,
  $(x,y) \in R$ lies in $R_j$ if and only if $x$ lies in $E_j$. In particular
  $R_0 = \emptyset$ since $E_0$ is the set of normal forms of $\to$.

  Then the existence of an lc-structure $c_\bullet$ on $R$ is equivalent to the
  existence for any $r = (x,y) \in R$ of some equivalence path
  $c(r) : h^\tau(x) \xleftrightarrow{*} y$ of shape:
  \[
    \begin{tikzcd}
      x \ar[r, "r"] \ar[d,"h(x)"'] & y \\
      h^\tau(x) \ar[ru, "c(r)"'] & 
    \end{tikzcd}
  \]
  such that any rewriting step occurring in $c(x)$ has source smaller than
  $x$. This is true whenever $\to$ is locally confluent, and Theorem
  \ref{thm:Newman_general} recovers that $R$ is confluent.

  The same phenomenon applies to the case $\mathcal C = \Ab$, which is also
  locally finitely presented.
\end{example}

\begin{note}
  The idea of ordering the relations is not new in abstract rewriting, and is
  for example the main idea behind van Oostrom's notion of decreasing diagrams
  \cite{VO_decreasing}. In fact when specialised to the case
  $\mathcal C = \Set$, Theorem \ref{thm:Newman_general} (which is more general
  than Newman's Lemma since, as noted in Example
  \ref{ex:non_terminating_terminating}, termination in the sense of Definition
  XX is more geenral than the ususal termination of relations) is a direct
  consequence of \cite[Theorem 3.7]{VO_decreasing}
\end{note}


 
\bibliography{biblio}

\end{document}